\documentclass[12pt, reqno]{amsart}
\usepackage{amsmath, amssymb, amsthm, fullpage, mathrsfs}

\newtheorem{thm}{Theorem}[section]
\newtheorem{corollary}[thm]{Corollary}
\newtheorem{lemma}[thm]{Lemma}
\newtheorem{theorem}[thm]{Theorem}

\numberwithin{equation}{section}

\theoremstyle{definition}
\newtheorem{rem}[thm]{Remark}

\newcommand{\al}{\alpha}
\renewcommand{\b}{\beta}
\newcommand{\de}{\delta}

\newcommand{\la}{\lambda}
\renewcommand{\phi}{\varphi}

\renewcommand{\d}{\partial}
\newcommand{\R}{{\mathbb R}}
\newcommand{\Case}[1]{\noindent \underline{Case #1.}}

\renewcommand{\qed}{\rule{3mm}{3mm}}
\renewenvironment{proof}
    {\vspace{1mm}\noindent\textbf{Proof.}}
    {\hspace*{\fill} $\qed$\vspace{1mm}}
\newenvironment{proof_of}[1]
    {\vspace{1mm}\noindent {\bf Proof of #1.}}
    {\hspace*{\fill} $\qed$\vspace{1mm}}

\begin{document}
\title[Asymptotic expansion of radial solutions]
{Asymptotic expansion of radial solutions for supercritical biharmonic equations}
\author{Paschalis Karageorgis}
\address{School of Mathematics, Trinity College, Dublin 2, Ireland.}
\email{pete@maths.tcd.ie}

\begin{abstract}
Consider the positive, radial solutions of the nonlinear biharmonic equation $\Delta^2 \phi = \phi^p$.  There is a critical
power $p_c$ such that solutions are linearly stable if and only if $p\geq p_c$.  We obtain their asymptotic expansion at
infinity in the case that $p\geq p_c$.
\end{abstract}
\maketitle

\section{Introduction}
We study the positive, radial solutions of the nonlinear biharmonic equation
\begin{equation}\label{ee4}
\Delta^2 \phi(x) = \phi(x)^p, \qquad x\in \R^n.
\end{equation}
Such solutions are known to exist when $n>4$ and $p\geq \frac{n+4}{n-4}$, but they fail to exist, otherwise.  Our goal in this
paper is to derive their asymptotic expansion as $|x|\to\infty$ and thus obtain an analogue of a well-known result \cite{GNW}
for the second-order equation
\begin{equation}\label{ee2}
-\Delta \phi (x) = \phi(x)^p, \qquad x\in \R^n.
\end{equation}
We remark that the qualitative properties of solutions to \eqref{ee4} resemble those of solutions to \eqref{ee2}, however the
methods used to establish them are quite different.

First, let us summarize the known results for the second-order equation \eqref{ee2}.  If $n\leq 2$ or $1<p<\frac{n+2}{n-2}$,
then no positive solutions exist; and if $p=\frac{n+2}{n-2}$, then all positive solutions are radial up to a translation and
also explicit \cite{CGS, CL}.  If $p>\frac{n+2}{n-2}$, finally, the positive radial solutions form a one-parameter family
$\{\phi_\al\}_{\al>0}$, see \cite{DN, Li}.  When it comes to the behavior of the solutions $\phi_\al$, a crucial role is
played by a singular solution of the form
\begin{equation}\label{Phi2}
\Phi(x) = a_0(n,p) \cdot |x|^{-\frac{2}{p-1}}.
\end{equation}
As $|x|\to\infty$, that is, each $\phi_\al$ behaves like the singular solution $\Phi$.

There is also a critical value $p_c$ associated with the second-order equation \eqref{ee2}.  This is defined by taking
$(\frac{n+2}{n-2}, p_c)$ to be the maximal interval on which
\begin{equation}\label{pc2}
p \cdot Q_2\left( \frac{2}{p-1} \right) > Q_2\left( \frac{n-2}{2} \right),
\qquad Q_2(\al) \equiv |x|^{\al+2} (-\Delta) \,|x|^{-\al}.
\end{equation}
It is easy to check that $p_c=\infty$ if $n\leq 10$ and that $p_c<\infty$, otherwise.  In the subcritical case
$\frac{n+2}{n-2}< p<p_c$, each radial solution $\phi_\al$ oscillates around the singular solution \eqref{Phi2} and the graphs
of any two radial solutions intersect one another \cite{Wang}.  In the supercritical case $p\geq p_c$, on the other hand, the
graphs of distinct solutions $\phi_\al$ do not intersect one another and they do not intersect the graph of the singular
solution, either \cite{Wang}.

Let us now turn to the fourth-order equation \eqref{ee4}.  Although the known results are very similar to those listed above,
their proofs are generally quite different.  In this case, positive solutions fail to exist if $n\leq 4$ or $1<p<
\frac{n+4}{n-4}$, while they are explicit and radial up to a translation, if $p=\frac{n+4}{n-4}$, see \cite{CLO, Lin, WX}.  And if
$p>\frac{n+4}{n-4}$, there is a one-parameter family of radial solutions $\phi_\al$ which behave asymptotically like a
singular solution of the form
\begin{equation}\label{Phi4}
\Phi(x) = a_0(n,p) \cdot |x|^{-\frac{4}{p-1}},
\end{equation}
see \cite{GG}.  The associated critical value arose in \cite{GG} and it is defined by taking $(\frac{n+4}{n-4}, p_c)$ to be
the maximal interval on which
\begin{equation}\label{pc4}
p \cdot Q_4\left( \frac{4}{p-1} \right) > Q_4\left( \frac{n-4}{2} \right),
\qquad Q_4(\al) \equiv |x|^{\al+4} \Delta^2 |x|^{-\al}.
\end{equation}
Moreover, $p_c<\infty$ if and only if $n\geq 13$, while the graphs of radial solutions intersect one another in the
subcritical case \cite{FGK, Wei2} but not in the supercritical case \cite{Wei2, Kar}.

There are also results that are well-known in the second-order case \eqref{ee2} but remain open in the fourth-order case
\eqref{ee4}.  Those include the precise asymptotic expansion of the radial solutions in the supercritical case
$p\geq p_c$.  Expansions for the second-order case go back to \cite{GNW, Li} and they provided a key ingredient for studying
the stability of steady states for the nonlinear heat equation $u_t - \Delta u = u^p$ in the supercritical case \cite{GNW}.
Our goal in this paper is to establish an analogous expansion for the fourth-order problem.

\begin{theorem}\label{main}
Suppose $n\geq 13$ and $p\geq p_c$.  Let $\phi$ be a positive, radial solution of \eqref{ee4} and let $\la_1 < \la_2 \leq
\la_3 < 0 < \la_4$ be the eigenvalues of the associated linearized equation, see Lemma \ref{fact}.  Then there exists a
finite sequence $p_c = p_1 < p_2 < \cdots < p_N$ such that $\la_2\leq k\la_3$ if and only if $p\geq p_k$.  Moreover, $\phi$
has the following asymptotic expansion as $r\to \infty$.

\begin{itemize}
\item[(a)]
If $p_k< p < p_{k+1}$, with the convention that $p_{N+1}=\infty$, then
\begin{equation}\label{main1}
r^{\frac{4}{p-1}} \phi(r) = a_0 + \sum_{j=1}^k a_j r^{j\la_3} + b_1 r^{\la_2} + a_{k+1}r^{(k+1)\la_3} +  O
\left( r^{\la_2+ \la_3} \right).
\end{equation}

\item[(b)]
If $p=p_k$ for some $k\geq 2$, then $k\la_3=\la_2$ and
\begin{equation}\label{main2}
r^{\frac{4}{p-1}} \phi(r) = a_0 + \sum_{j=1}^{k-1} a_j r^{j\la_3} + r^{k\la_3} (b_1\log r + a_k) + O
\left( r^{\la_2+\la_3} \log r\right).
\end{equation}

\item[(c)]
If $p=p_c$, finally, then $\la_2=\la_3$ and
\begin{equation}\label{main3}
r^{\frac{4}{p-1}} \phi(r) = a_0 + r^{\la_3} (b_1\log r + a_1) + b_2 r^{2\la_3} (\log r)^2 + O \left( r^{2\la_3} \log r\right).
\end{equation}

\end{itemize}
\end{theorem}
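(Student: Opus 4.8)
The plan is to linearize around the singular solution after an Emden--Fowler change of variables. Put $\mu=\frac{4}{p-1}$, $t=\log r$ and $w(t)=r^{\mu}\phi(r)$; then the radial form of \eqref{ee4} becomes an autonomous fourth-order ODE $\mathcal{A}(w)=0$ for which the constant $w\equiv a_0=a_0(n,p)$ is an equilibrium, corresponding to the singular solution \eqref{Phi4}. Writing $w=a_0+v$ gives $Lv=N(v)$, where $L$ is the constant-coefficient linearization — its characteristic polynomial is $Q_4(\mu-\lambda)-p\,Q_4(\mu)$, whose four roots are the $\lambda_i$ of Lemma~\ref{fact} — and $N$ is real-analytic near $0$ with $N(0)=N'(0)=0$, so that $|N(v)|\le C|v|^{2}$ for $v$ small. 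From \cite{GG} one knows $r^{\mu}\phi(r)\to a_0$, i.e.\ $v(t)\to0$ as $t\to\infty$; in particular the unstable mode $e^{\lambda_4 t}$ is absent.

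Next I would turn $Lv=N(v)$ into an integral equation on a half-line $[T,\infty)$. Since $v$ is bounded, the bounded Green's function of $L$ on $[T,\infty)$ represents the $e^{\lambda_4 t}$-direction by an integral from $t$ to $\infty$ and the three stable directions by integrals from $T$ to $t$, so that $v=\sum_{i=1}^{3}c_i e^{\lambda_i(t-T)}+(\mathcal{G}N(v))(t)$ for suitable constants $c_i$. The mechanical fact that drives the rest is that, modulo homogeneous solutions (which are reabsorbed into the $c_i$ at each stage), $\mathcal{G}$ sends a forcing of size $t^{m}e^{\sigma t}$ with $\sigma<0$ to an output of size $t^{m}e^{\sigma t}$ when $\sigma$ is not a root of the characteristic polynomial, and to one of size $t^{m+1}e^{\sigma t}$ when $\sigma=\lambda_i$ for some $i\le3$.

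The heart of the proof is a finite bootstrap. A contraction argument in a suitably weighted sup-norm first yields exponential decay, with $v=O(e^{\lambda_3 t})$ when $p>p_c$ and $v=O(t\,e^{\lambda_3 t})$ when $p=p_c$; then $N(v)=O(e^{2\lambda_3 t})$ (with a factor $t^{2}$ when $p=p_c$), and feeding this back through the integral equation produces the next terms. Iterating generates the powers $r^{j\lambda_3}$ — from the $j$-th nonlinear iterate of the $e^{\lambda_3 t}$-mode — together with $r^{\lambda_2}$, in the order fixed by their sizes, and one stops once the next correction is $O(r^{\lambda_2+\lambda_3})$. Justifying this requires the inequalities $\lambda_1<\lambda_2+\lambda_3$, $2\lambda_2\le\lambda_2+\lambda_3$ and $j\lambda_3>\lambda_2+\lambda_3$ for $1\le j\le k$; these follow from the symmetry of the characteristic polynomial about $\lambda=\mu-\frac{n-4}{2}$, which gives $\lambda_1+\lambda_4=\lambda_2+\lambda_3=\frac{8}{p-1}-(n-4)$, together with the location of the roots $\alpha$ of $Q_4(\alpha)=p\,Q_4(\mu)$ (one in $(-\infty,-2)$, two in $(0,n-4)$, one in $(n-2,\infty)$). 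The same explicit description shows that $\lambda_2/\lambda_3$ depends monotonically on $p$ and stays bounded as $p\to\infty$, which yields the finite sequence $p_c=p_1<\cdots<p_N$ with $\lambda_2\le k\lambda_3\iff p\ge p_k$. The logarithms appear precisely at the resonances that $\mathcal{G}$ detects: when $p=p_k$ with $k\ge2$ the $k$-th iterate of the $e^{\lambda_3 t}$-mode lives at the characteristic frequency $k\lambda_3=\lambda_2$, so $\mathcal{G}$ emits a $t\,e^{\lambda_2 t}=t\,e^{k\lambda_3 t}$ term and \eqref{main2} follows; and when $p=p_c$ the root $\lambda_2=\lambda_3$ is double, so the homogeneous part itself carries a $t\,e^{\lambda_3 t}$ term, whose square forces a $t^{2}e^{2\lambda_3 t}$ source and hence \eqref{main3}.

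The principal difficulty I anticipate lies not in any single step but in making the whole scheme airtight: constructing $\mathcal{G}$ so that the lone unstable eigenvalue is handled correctly, running the bootstrap with constants uniform in the number of iterations, cleanly reabsorbing at each stage the homogeneous corrections produced by $\mathcal{G}$, and carrying out the bookkeeping — especially at the resonant values $p=p_k$ and at $p=p_c$ — that determines exactly which powers and logarithms survive and which are swallowed by the $O$-term in each of the regimes (a), (b), (c).
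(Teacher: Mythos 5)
Your sketch takes a genuinely different route from the paper, and the difference matters at exactly one step: the first exponential decay estimate.

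What the paper does instead. After the Emden--Fowler change of variables it sets $Y(s)=r^{4/(p-1)}\phi(r)-L$ with $s=\log r$, and then \emph{does not} work with the full fourth-order Green's operator. It factors out the unstable eigenvalue by passing to $Z=Y'-\la_4 Y$, so that $Z$ solves a genuinely \emph{third-order} constant-coefficient equation driven by $g(Y)$, with only the three negative exponents $\la_1,\la_2,\la_3$ appearing in the variation-of-constants formula. The decisive extra input, which your argument does not use, is the monotonicity result of \cite{Kar}: $Y$ is strictly increasing and negative, hence $Z=|Y'|+|\la_4 Y|\geq 0$ and $|\la_4 Y|\leq Z$. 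This lets the nonlinearity be bounded by $|g(Y(\tau))|\leq \de Z(\tau)$, and since the integral representation of $Z$ is of Volterra type (all integrals run from $s_0$ to $s$), a one-line Gronwall gives $Z=O(e^{(\la_3+\de)s})$, after which $Y$ is recovered by the single convergent integral $Y(s)=-\int_s^\infty e^{\la_4(s-\tau)}Z(\tau)\,d\tau$. The author flags this as the key point where the proof departs from the second-order treatments of \cite{GNW,Li}.

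Where your version has a gap. You propose $v=\sum_{i\leq 3} c_ie^{\la_i(t-T)}+\mathcal{G}N(v)$ with $\mathcal{G}$ integrating the stable modes from $T$ and the unstable mode from $\infty$, and then ``a contraction argument in a suitably weighted sup-norm.'' The contraction constructs \emph{a} decaying solution with given stable data; but the object to be estimated is the fixed, already-existing solution $v$, about which you only know $v(t)\to 0$. To identify $v$ with the fixed point you must already know $v$ lies in the weighted space --- that is, you must already have exponential decay --- or else invoke the full hyperbolic stable-manifold theorem to place $v$ on the stable manifold. Note also that because the $\la_4$-part of $\mathcal{G}$ integrates from $t$ to $\infty$, the a priori inequality for $|v|$ is not of Volterra type, so Gronwall cannot be applied directly as you can for the paper's $Z$. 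None of this is fatal --- it is close in spirit to the second-order arguments in \cite{GNW,Li} --- but as written the step ``contraction $\Rightarrow$ $v=O(e^{\la_3 t})$'' is a real gap, and filling it is noticeably heavier than the paper's elementary argument once $Z\geq 0$ is known. The remainder of your plan (the finite sequence $p_1<\dots<p_N$ from the symmetry $\la_2+\la_3=2\la_*$, the bootstrap through Taylor expansion of the nonlinearity, and the appearance of logarithms precisely at the resonances $k\la_3=\la_2$ and $\la_2=\la_3$) matches the paper's Lemmas \ref{fact}, \ref{pis} and the inductive refinement of the expansions, so once the initial decay is secured the rest goes through as you describe.
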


Preliminary versions of these expansions appeared in \cite{Wei2, Win}.  The expansion in \cite{Win} only lists two terms, but
its proof is quite different from ours and contains some nice ideas.  The expansion in \cite{Wei2} lists three terms, but it
is not rigorously proven and not entirely correct in the critical case $p=p_c$.

The proof of Theorem \ref{main} is given in section \ref{aeai}; it heavily relies on the fact that $r^{\frac{4}{p-1}}\phi(r)$
is increasing, as first observed by the author \cite{Kar}.  Although a similar statement holds in the second-order case
\cite{Li2}, the corresponding proofs \cite{GNW, Li} do not use that statement.  Finally, section~\ref{basic} collects some
basic facts about the quartic polynomial \eqref{pc4} and the eigenvalues of the associated linearized equation; we use these
facts in the proof of our main result.

\section{Asymptotic expansion at infinity}\label{aeai}
In this section, we give the proof of Theorem \ref{main} regarding the asymptotic expansion of the positive, radial solutions
of \eqref{ee4}.  First, we use an Emden-Fowler transformation to transform \eqref{ee4} into an ODE whose linear part has
constant coefficients.  Then, we analyze this ODE using some key results of Gazzola-Grunau \cite{GG} and the author
\cite{Kar}.

\begin{lemma}\label{bm1}
Let $p>1$ and $m=\frac{4}{p-1}$.  If $\phi$ is a positive, radial solution of \eqref{ee4}, then
\begin{equation}\label{W}
W(s) = e^{m s} \phi(e^s) = r^m \phi(r), \quad\quad s= \log r= \log |x|
\end{equation}
is a solution of the ordinary differential equation
\begin{equation}
Q_4(m-\d_s) \,W(s) = W(s)^p,
\end{equation}
where $Q_4$ is the quartic polynomial defined by
\begin{equation}\label{Q}
Q_4(\al) = |x|^{\al+4} \Delta^2 |x|^{-\al} = \al(\al+2)(\al+2-n)(\al+4-n).
\end{equation}
\end{lemma}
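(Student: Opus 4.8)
The plan is to carry out the Emden--Fowler substitution by hand and reduce the statement to an identity between constant-coefficient operators in $\d_s$. I would begin by recording the action of $\Delta$ and $\Delta^2$ on powers of $|x|$: since $\Delta f = \d_r^2 f + \frac{n-1}{r}\d_r f$ on radial functions, one computes $\Delta\, r^{-\al} = \al(\al+2-n)\,r^{-\al-2}$, and iterating (replacing $\al$ by $\al+2$) gives $\Delta^2 r^{-\al} = \al(\al+2)(\al+2-n)(\al+4-n)\,r^{-\al-4}$, which is precisely the factorization of $Q_4$ claimed in \eqref{Q}. It is convenient to also introduce the second-order symbol $Q_2(t) = t(t+2-n)$, so that $\Delta\, r^{-\al} = Q_2(\al)\,r^{-\al-2}$ and $Q_4(t) = Q_2(t)\,Q_2(t+2)$.

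The heart of the matter is the operator identity
\begin{equation*}
r^{\al+4}\,\Delta^2\bigl(r^{-\al} v(\log r)\bigr) = Q_4(\al - \d_s)\,v(s), \qquad s = \log r,
\end{equation*}
valid for every $v \in C^4$. To prove it I would first establish the first-order version: writing $u = r^{-\b} v(\log r)$ and using $\d_r = r^{-1}\d_s$ repeatedly, a direct computation gives
\begin{equation*}
\Delta u = \d_r^2 u + \frac{n-1}{r}\,\d_r u = r^{-\b-2}\bigl[\d_s^2 - (2\b+2-n)\,\d_s + \b(\b+2-n)\bigr] v,
\end{equation*}
and the bracketed operator factors as $(\b-\d_s)(\b+2-n-\d_s) = Q_2(\b-\d_s)$. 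Applying this identity twice --- first with $\b = \al$, then with $\b = \al+2$ applied to $w(s) := Q_2(\al-\d_s)v(s)$ --- and using that polynomials in the single operator $\d_s$ commute, the two factors combine to $Q_2(\al+2-\d_s)\,Q_2(\al-\d_s) = Q_4(\al-\d_s)$, giving the displayed identity.

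It then remains to specialize to $\al = m = \frac{4}{p-1}$ and $v = W$, so that $\phi(r) = r^{-m} W(\log r)$ and the identity reads $r^{m+4}\,\Delta^2\phi = Q_4(m-\d_s)\,W$. On the other hand, \eqref{ee4} gives $r^{m+4}\,\Delta^2\phi = r^{m+4}\phi^p = r^{\,m+4-mp}\,W^p$, and since $m(p-1) = 4$ one has $m+4-mp = 0$, so the powers of $r$ cancel and $Q_4(m-\d_s)\,W = W^p$, as claimed. I do not expect any genuine obstacle here: this is essentially just a change of variables. The only points deserving a little care are the bookkeeping in the second differentiation and keeping the sign conventions consistent (the $Q_2$ above corresponds to $+\Delta$, whereas \eqref{pc2} is stated in terms of $-\Delta$). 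Since $\phi$ is a classical positive radial solution, $W$ is of class $C^4$ on all of $\R$ and each manipulation above is legitimate pointwise for $r > 0$.
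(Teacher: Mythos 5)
Your proof is correct and takes essentially the same route as the paper: an Emden--Fowler substitution $s=\log r$, reduction to a constant-coefficient operator identity in $\d_s$, and the factorization $Q_4(\al-\d_s) = Q_2(\al+2-\d_s)\,Q_2(\al-\d_s)$ obtained by iterating the second-order computation. The paper organizes the algebra slightly differently --- writing $\Delta = e^{-2s}(n-2+\d_s)\d_s$ and using the commutation rule $\d_s e^{-ks}=e^{-ks}(\d_s-k)$ to push the exponential weight through --- whereas you compute the conjugated second-order operator explicitly and factor it; but these are two ways of doing the same bookkeeping and there is no substantive difference.
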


\begin{proof}
Since $\d_r = e^{-s}\d_s$, a short computation allows us to write the radial Laplacian as
\begin{equation*}
\Delta = \d_r^2 + (n-1) r^{-1}\d_r = e^{-2s} (n-2+\d_s) \d_s.
\end{equation*}
Using the operator identity $\d_s e^{-ks} = e^{-ks}(\d_s-k)$, one can then easily check that
\begin{align*}
\Delta^2 e^{-m s} &= e^{-4s -m s} \,Q_4(m-\d_s) = e^{-m p s} \,Q_4(m-\d_s).
\end{align*}
This also implies that $Q_4(m-\d_s) \,W(s) = e^{m ps} \Delta^2 \phi (e^s) = W(s)^p$, as needed.
\end{proof}

\begin{lemma}\label{bm2}
Suppose that $n>4$ and $p>\frac{n+4}{n-4}$.  Then the positive, radial solutions of \eqref{ee4} form a one-parameter family
$\{ \phi_\al \}_{\al>0}$, where each $\phi_\al$ satisfies $\phi_\al(0)= \al$ and
\begin{equation}\label{phia}
\lim_{r\to \infty} r^m \phi_\al(r) = Q_4(m)^\frac{1}{p-1}
\end{equation}
with $m=\frac{4}{p-1}$ and $Q_4$ as in \eqref{Q}.  If we also assume that $p\geq p_c$, then
\begin{equation} \label{Y}
Y = r^m \phi_\al(r) - Q_4(m)^\frac{1}{p-1}
\end{equation}
is strictly increasing for all $r>0$, hence also negative for all $r>0$.
\end{lemma}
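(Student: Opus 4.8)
The plan is to work with the Emden–Fowler variable $W(s) = r^m\phi_\al(r)$ from Lemma \ref{bm1}, so that $Y = W - Q_4(m)^{1/(p-1)}$ and the ODE becomes $Q_4(m-\d_s)W = W^p$. The existence of the one-parameter family $\{\phi_\al\}$ and the limit \eqref{phia} are the content of Gazzola–Grunau \cite{GG}, so the only genuinely new assertion is the monotonicity of $Y$ under the hypothesis $p\geq p_c$. I would first record that $W$ is bounded: it is positive, and since $Y\to 0$ at $+\infty$ by \eqref{phia} one has $W\to Q_4(m)^{1/(p-1)}$ as $s\to\infty$; a separate argument (or a citation to \cite{GG, Kar}) gives that $W$ stays in the range $(0, Q_4(m)^{1/(p-1)}]$, which already forces $Y\leq 0$. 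The real work is to upgrade $Y\leq 0$ to $Y$ strictly increasing.

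For the monotonicity I would differentiate the ODE and set up a linear equation for $V = \d_s W = r\,\d_r(r^m\phi_\al)$. Writing $Q_4(m-\d_s)$ as a fourth-order constant-coefficient operator $L$, differentiation gives $L V = p\,W^{p-1} V$. The strategy is a maximum-principle / factorization argument: since $p\geq p_c$, the defining inequality \eqref{pc4}, namely $p\,Q_4(m) > Q_4\!\big(\tfrac{n-4}{2}\big)$ together with the structure of the roots of $Q_4$, guarantees that the fourth-order operator $L - p\,W^{p-1}$ (or rather $L - p\,Q_4(m)$, since $0<W^{p-1}\le Q_4(m)$) factors through second-order operators whose zeroth-order coefficients have a sign that precludes a sign change of $V$. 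Concretely, I expect to show that $V$ cannot vanish: if $V(s_0)=0$ at some finite $s_0$ then, combining the ODE for $V$ with the boundary behavior ($V>0$ near $s=-\infty$ because $\phi_\al(0)=\al>0$ is a strict maximum of $r^m\phi_\al$ near the origin, and $V\to 0$ as $s\to+\infty$), one derives a contradiction with the spectral inequality coming from $p\geq p_c$. Hence $V>0$ throughout, i.e. $Y$ is strictly increasing.

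More precisely, the key algebraic input is that for $p\geq p_c$ the polynomial $Q_4(m-\mu) - p\,Q_4(m)$ — whose roots are exactly the eigenvalues $\la_1<\la_2\le\la_3<0<\la_4$ of the linearization referenced in the statement of Theorem \ref{main} — has all four of its negative roots real (this is the content of the $p\ge p_c$ regime, as opposed to the oscillatory subcritical case). This reality lets me write $L - p\,Q_4(m) = (\d_s - \la_1)(\d_s-\la_2)(\d_s-\la_3)(\d_s-\la_4)$ and run an iterated first-order comparison: applying the factors in a suitable order to the equation $(L - p\,W^{p-1})V \ge 0$ (using $W^{p-1}\le Q_4(m)$ with a favorable sign once we know $Y\le 0$), and integrating against the appropriate exponential weights $e^{-\la_j s}$ from the correct endpoint, one propagates positivity of $V$ from the known boundary sign. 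This is essentially the argument of \cite{Kar}, which I would cite for the sharp statement and reproduce only in the form needed here.

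I expect the main obstacle to be the bookkeeping in the iterated factorization: one must choose, for each of the four first-order factors $(\d_s - \la_j)$, whether to integrate from $-\infty$ or from $+\infty$, and this choice depends on the sign of $\la_j$ and on which boundary condition ($V>0$ near $-\infty$ versus $V\to 0$ and $Y\to 0$ near $+\infty$) controls that step; getting all four consistent, and in particular handling the degenerate cases $\la_2 = \la_3$ (at $p=p_c$) or $\la_2\le\la_3$ with possible coincidences, is where the argument is delicate. The inequality $p\,Q_4(m) > Q_4\!\big(\tfrac{n-4}{2}\big)$ enters precisely to guarantee that $\la_1 + \la_4 < 0$ or some such relation among the roots that makes the weighted integrals converge at the relevant endpoints; verifying that this is exactly what \eqref{pc4} provides, via the basic facts about $Q_4$ collected in section \ref{basic}, is the last piece to pin down.
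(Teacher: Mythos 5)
Your proposal lands on exactly the paper's approach: the paper's entire proof is three citations — Theorem~1 in \cite{GG} for existence, Theorem~3 in \cite{GG} for \eqref{phia}, and equation~(4.7) in \cite{Kar} for the strict monotonicity of $Y$ when $p\ge p_c$ — and you correctly identify that the lemma reduces to these references. Two small remarks on your reconstruction of the monotonicity step (which the paper does not reproduce): first, the paper's logical order is the reverse of yours — it obtains negativity of $Y$ \emph{from} strict monotonicity together with $Y\to 0$ at infinity (that is the force of the word ``hence'' in the statement), rather than establishing $Y\le 0$ as a separate prior input; your order is also viable but needs the independent non-intersection result. Second, your justification that $V=\d_s W>0$ near $s=-\infty$ misstates the boundary behavior: $r^m\phi_\al(r)\to 0$ as $r\to 0^+$ since $m>0$, so $W(-\infty)=0$, and positivity of $V$ there comes from $W\sim\al e^{ms}$ near $s=-\infty$, not from $\al$ being a maximum of $r^m\phi_\al$.
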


\begin{proof}
See Theorem 1 in \cite{GG} for the existence part, Theorem 3 in \cite{GG} for a proof of \eqref{phia} and equation (4.7) in
\cite{Kar} for the monotonicity of $Y$ in the supercritical case.
\end{proof}

To understand the behavior of the function $Y$ in \eqref{Y}, we use Lemma \ref{bm1} to get
\begin{equation} \label{g}
\Bigl[ Q_4(m-\d_s) - pQ_4(m) \Bigr] Y(s) = (Y+L)^p - L^p - pL^{p-1}Y \equiv g(Y),
\end{equation}
where $s=\log r$ and $L=Q_4(m)^\frac{1}{p-1}$.  Note that the linearized equation is given by the left hand side.  As we shall
show in Lemma \ref{fact}, the associated eigenvalues are all real in the supercritical case $p\geq p_c$ and they also satisfy
\begin{equation}\label{la}
\la_1 < \la_2 \leq \la_3 < 0 < \la_4.
\end{equation}
The presence of a positive eigenvalue is likely to complicate matters because we are seeking an expansion as $s\to\infty$.  We
thus isolate this eigenvalue and we factor \eqref{g} as
\begin{equation} \label{eq1}
(\d_s-\la_1)(\d_s-\la_2)(\d_s-\la_3)Z(s) = g(Y), \qquad Z(s) \equiv Y'(s) - \la_4 Y(s).
\end{equation}
Since $Y(s)$ is negative and increasing by Lemma \ref{bm2}, we actually have
\begin{equation}\label{neat}
Z(s) = |Y'(s)| + |\la_4 Y(s)|
\end{equation}
and we can use Gronwall-type estimates to control $Z(s)$; this is where Lemma \ref{bm2} becomes crucial, as it ensures that
$Z(s)\geq 0$. Once we have some precise estimate for $Z(s)$, we can simply integrate to get an estimate for $Y(s)$, and we can
then repeatedly use the following lemma to obtain refined expansions for both $Z(s)$ and $Y(s)$.

\begin{lemma}\label{expl}
Suppose $n\geq 13$ and $p\geq p_c$.  Let $Z(s)$ be as in \eqref{eq1}.   Given any $s_0\in\R$ then, there exist some constants
$\al_i,\b_i$ such that
\begin{equation}\label{Zex1}
Z(s) = \sum_{i=1}^3 \al_i e^{\la_i s} + \b_i \int_{s_0}^s e^{\la_i(s-\tau)} \cdot g(Y(\tau)) \,d\tau
\end{equation}
in the supercritical case $p>p_c$ and
\begin{align}\label{Zex2}
Z(s) &= \sum_{i=1}^2 \al_i e^{\la_i s} + \b_i \int_{s_0}^s e^{\la_i(s-\tau)} \cdot g(Y(\tau)) \,d\tau \notag\\
&\qquad + \al_3se^{\la_3s} + \b_3 \int_{s_0}^s (s-\tau) e^{\la_3(s-\tau)} \cdot g(Y(\tau)) \,d\tau
\end{align}
in the critical case $p=p_c$.  Moreover, each $\al_i$ depends on $s_0$ and the eigenvalues $\la_1,\la_2,\la_3$ which appear in
equation \eqref{eq1}, whereas each $\b_i$ depends solely on the eigenvalues.
\end{lemma}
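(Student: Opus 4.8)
The plan is to treat \eqref{eq1} as a linear, constant-coefficient, third-order ordinary differential equation for $Z$, in which the forcing term $g(Y)$ is regarded as a given function of $s$; it is continuous on every interval $[s_0,s]$, since $Y$ is a smooth solution and $Y+L=r^m\phi>0$, so all the integrals below are well defined. Solving this ODE by the integrating-factor method, applied three times in succession, produces \eqref{Zex1} and \eqref{Zex2} directly, and a short computation identifies the coefficients that depend only on the eigenvalues.

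Concretely, I would first set $U_2=(\d_s-\la_3)Z$ and $U_1=(\d_s-\la_2)U_2$, so that \eqref{eq1} becomes $(\d_s-\la_1)U_1=g(Y)$. The integrating-factor formula gives
\[
U_1(s)=e^{-\la_1 s_0}U_1(s_0)\,e^{\la_1 s}+\int_{s_0}^s e^{\la_1(s-\tau)}\,g(Y(\tau))\,d\tau ,
\]
and applying the same formula to $(\d_s-\la_2)U_2=U_1$ and then to $(\d_s-\la_3)Z=U_2$ expresses $Z(s)$ as a sum of three homogeneous terms (constant multiples of $e^{\la_1 s}$, $e^{\la_2 s}$, $e^{\la_3 s}$, the constants built from $U_1(s_0)$, $U_2(s_0)$, $Z(s_0)$) plus a triple iterated convolution of $g(Y)$ against those three exponentials.

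The key point is to collapse the iterated convolution into single convolutions. Interchanging the order of integration (Dirichlet's formula) and using the elementary identity
\[
\int_{s_0}^s e^{\la_i(s-\sigma)}e^{\la_j(\sigma-\tau)}\,d\sigma=\frac{1}{\la_i-\la_j}\bigl(e^{\la_i(s-\tau)}-e^{\la_j(s-\tau)}\bigr),\qquad \la_i\neq\la_j ,
\]
the triple convolution becomes $\sum_{i=1}^3\b_i\int_{s_0}^s e^{\la_i(s-\tau)}g(Y(\tau))\,d\tau$, where $\b_i=\prod_{j\neq i}(\la_i-\la_j)^{-1}$ is exactly the partial-fraction coefficient of $1/Q$ at $\la_i$ for $Q(\zeta)=(\zeta-\la_1)(\zeta-\la_2)(\zeta-\la_3)$; in particular $\b_i$ depends only on the eigenvalues. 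Collecting all the remaining constants in front of $e^{\la_1 s}$, $e^{\la_2 s}$, $e^{\la_3 s}$ gives \eqref{Zex1}, with $\al_i$ depending on $s_0$ (through the data $U_1(s_0)$, $U_2(s_0)$, $Z(s_0)$) and on $\la_1,\la_2,\la_3$. In the critical case $p=p_c$ one has $\la_2=\la_3$; then the identity above degenerates into $\int_{s_0}^s e^{\la_3(s-\sigma)}e^{\la_3(\sigma-\tau)}\,d\sigma=(s-\tau)e^{\la_3(s-\tau)}$, so $1/Q$ acquires a double-pole term whose associated convolution kernel is $(s-\tau)e^{\la_3(s-\tau)}$, and the homogeneous solution $se^{\la_3 s}$ appears --- which is precisely the form \eqref{Zex2}.

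I do not expect a real obstacle: the lemma is essentially variation of parameters for a constant-coefficient equation. The only care needed is the bookkeeping that cleanly separates the $s_0$-dependent constants $\al_i$ from the eigenvalue-only constants $\b_i$ --- best done by writing the $\b_i$ from the outset as the partial-fraction coefficients of $1/Q$ and noting that all the $s_0$-dependence sits in the homogeneous part of the general solution --- together with treating the degenerate eigenvalue $\la_2=\la_3$ at $p=p_c$ as a separate case, exactly as above. As a sanity check one can also simply differentiate the right-hand sides of \eqref{Zex1} and \eqref{Zex2} and verify they satisfy \eqref{eq1}, which pins down the admissible values of $\b_i$.
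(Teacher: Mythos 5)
Your proposal is correct and follows essentially the same route as the paper: integrate the factored equation $(\d_s-\la_1)(\d_s-\la_2)(\d_s-\la_3)Z=g(Y)$ three times via integrating factors, then use Fubini to collapse the iterated convolutions into single convolutions, treating the repeated root $\la_2=\la_3$ at $p=p_c$ as a degenerate case producing the $(s-\tau)e^{\la_3(s-\tau)}$ kernel. One small slip: in your elementary identity the inner integral should run from $\tau$ to $s$ (as it does after interchanging the order of integration), not from $s_0$ to $s$; with that correction the identity and the rest of the argument are exactly as in the paper, and your observation that $\b_i$ are the partial-fraction residues of $1/\prod(\zeta-\la_j)$ is a tidy way to see why they depend only on the eigenvalues.
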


\begin{proof}
We multiply the first equation in \eqref{eq1} by $e^{-\la_1s}$ and we integrate to get
\begin{align*}
(\d_s - \la_2)(\d_s - \la_3) Z(s) = A_1 e^{\la_1 s} + \int_{s_0}^s e^{\la_1(s-\tau)} \cdot g(Y(\tau)) \,d\tau
\end{align*}
for some constant $A_1$.  Repeating the same argument once again, we arrive at
\begin{align*}
(\d_s - \la_3) Z(s) = B_1 e^{\la_1 s} + B_2 e^{\la_2s} +
\int_{s_0}^s \int_{s_0}^\rho e^{\la_2(s-\rho)} e^{\la_1(\rho-\tau)} \cdot g(Y(\tau)) \,d\tau\,d\rho
\end{align*}
because $\la_1<\la_2$.  Once we now switch the order of integration, we get
\begin{align*}
(\d_s - \la_3) Z(s)
&= B_1e^{\la_1 s} + B_2 e^{\la_2s} + \int_{s_0}^s \int_\tau^s e^{\la_2(s-\rho)} e^{\la_1(\rho-\tau)} \cdot g(Y(\tau)) \,d\rho\,d\tau \\
&= B_1e^{\la_1 s} + B_2 e^{\la_2s} + \int_{s_0}^s \frac{e^{\la_1(s-\tau)} - e^{\la_2(s-\tau)}}{\la_1 - \la_2} \cdot g(Y(\tau))\,d\tau.
\end{align*}
In the supercritical case, $\la_2<\la_3$ by Lemma \ref{fact}, so we can repeat our approach once again to deduce
\eqref{Zex1}. In the critical case, $\la_2=\la_3$ so our approach leads to \eqref{Zex2}.
\end{proof}

\begin{lemma}\label{1st}
Suppose $n\geq 13$ and $p\geq p_c$.  Let $Z(s)$ be as in \eqref{eq1} and let $\de>0$.  Then
\begin{equation}\label{11}
Z(s)= O \left( e^{\la_3s+\de s} \right) \quad\text{as\, $s\to\infty$.}
\end{equation}
\end{lemma}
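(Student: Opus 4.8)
The plan is to convert the integral representations of Lemma~\ref{expl} into a self-improving (Gronwall-type) estimate, exploiting the monotonicity of $Y$ from Lemma~\ref{bm2} at two decisive points.

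First I would record what Lemma~\ref{bm2} gives. Since $Y(s)\to 0$ by \eqref{phia} while $Y$ is negative and increasing, we have $Y'(s)\ge 0$, so by \eqref{neat}
\[
Z(s)=Y'(s)-\la_4 Y(s)=|Y'(s)|+\la_4|Y(s)|\ \ge\ \la_4|Y(s)|\ \ge\ 0,
\]
and, because $Y$ increases toward $0$, also $|Y(s)|\le|Y(s_0)|$ for every $s\ge s_0$. Next I would estimate the nonlinearity: a second-order Taylor expansion of $g(Y)=(Y+L)^p-L^p-pL^{p-1}Y$ about $Y=0$ (note $g(0)=g'(0)=0$) gives $|g(Y(s))|\le C|Y(s)|^2$ for $s\ge s_0$, with a constant $C$ that stays bounded as $s_0\to\infty$, since then $Y+L=r^m\phi(r)$ ranges over a compact subinterval of $(0,\infty)$ that shrinks to $\{L\}$. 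Combining the two facts gives the key bound
\[
|g(Y(s))|\ \le\ C|Y(s_0)|\cdot|Y(s)|\ \le\ \frac{C|Y(s_0)|}{\la_4}\,Z(s)\ =:\ \eta(s_0)\,Z(s),\qquad s\ge s_0,
\]
where $\eta(s_0)\to 0$ as $s_0\to\infty$. This is the heart of the matter: the forcing term in \eqref{eq1} is dominated by $Z$ itself with a coefficient that can be made as small as we like by taking $s_0$ large.

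Now fix $\de>0$ and insert $|g(Y)|\le\eta Z$ into \eqref{Zex1} (or \eqref{Zex2} in the critical case); using $Z\ge 0$ one gets, for $s_0\le s\le T$,
\[
Z(s)\ \le\ \sum_i|\al_i|e^{\la_i s}\;+\;\eta\sum_i|\b_i|\int_{s_0}^s(s-\tau)^{k_i}e^{\la_i(s-\tau)}Z(\tau)\,d\tau,\qquad k_i\in\{0,1\}.
\]
Since $\la_1<\la_2\le\la_3<0$ and $s\,e^{-\de s}$ is bounded, the homogeneous part (including the term $\al_3 s\,e^{\la_3 s}$ present in the critical case) is $O(e^{(\la_3+\de)s})$. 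For the convolutions, put $M_T=\sup_{s_0\le s\le T}Z(s)e^{-(\la_3+\de)s}$, which is finite by continuity of $Z$; substituting $u=s-\tau$ and $Z(\tau)\le M_T e^{(\la_3+\de)\tau}$, each such integral is at most $M_T e^{(\la_3+\de)s}\int_0^\infty u^{k_i}e^{(\la_i-\la_3-\de)u}\,du$, and $\la_i-\la_3-\de\le-\de<0$ for $i=1,2,3$, so the $u$-integral is a finite constant depending only on $\de$ and the eigenvalues. Altogether $M_T\le C_0+\eta(s_0)\,C_1(\de)\,M_T$ with $C_0,C_1(\de)$ independent of $T$. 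Choosing $s_0$ so large that $\eta(s_0)\,C_1(\de)\le\tfrac12$ yields $M_T\le 2C_0$ for all $T$, and letting $T\to\infty$ gives $Z(s)\le 2C_0\,e^{(\la_3+\de)s}$ for $s\ge s_0$, i.e.\ \eqref{11}.

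The only point needing care is that the constants $\al_i$ in Lemma~\ref{expl} depend on $s_0$: one first chooses $s_0$ large to make $\eta(s_0)$ small, and only afterwards reads off $C_0=\sum_i|\al_i|$ (together with the log-coefficient in the critical case), which is harmless because we need only $C_0<\infty$. I expect the real obstacle to be isolating the linear bound $|g(Y(s))|\le\eta(s_0)Z(s)$: recognizing that the quadratic estimate $g(Y)=O(Y^2)$, together with $|Y|\le Z/\la_4$ and $Y\to 0$, is precisely what turns \eqref{Zex1} into a self-improving inequality. The remaining exponential bookkeeping is routine exactly because $\de>0$ keeps every exponent $\la_3+\de-\la_i$ bounded away from $0$, so no resonant (polynomial-times-exponential) growth can appear.
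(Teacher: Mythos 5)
Your argument is correct, and it rests on the same core observation as the paper's: the monotonicity of $Y$ from Lemma \ref{bm2} forces $Z(s)\ge\la_4|Y(s)|\ge 0$, so the quadratic smallness of $g$ near $0$ (equivalently $g'(0)=0$) lets one dominate $|g(Y(\tau))|$ by $\eta Z(\tau)$ with $\eta$ as small as desired once $s_0$ is large, turning \eqref{Zex1}/\eqref{Zex2} into a self-improving integral inequality. Where you diverge from the paper is in how that inequality is closed. The paper weights $Z$ by $e^{-\la_3 s}$ and then applies Gronwall's lemma in the supercritical case, but for $p=p_c$ the double-integral term of \eqref{Zex2} forces a separate differential-inequality argument: it introduces $F(s)=\int_{s_0}^s(s-\tau)e^{-\la_3\tau}Z(\tau)\,d\tau$, converts the bound into $F''\le C_1s+\frac{\de}{2}F'+\frac{\de^2}{2}F$, and integrates that second-order inequality by hand. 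You instead weight by $e^{-(\la_3+\de)s}$ from the outset and run a sup-norm absorption argument with $M_T=\sup_{[s_0,T]}Z(s)e^{-(\la_3+\de)s}$; the $\de$-loss makes every convolution kernel (including the $(s-\tau)e^{\la_3(s-\tau)}$ kernel of the critical case) integrable, so the supercritical and critical cases are handled uniformly. This is a genuine simplification of the critical case at no cost elsewhere, and your careful note about the order of choices (fix $s_0$ first to make $\eta(s_0)C_1(\de)\le\tfrac12$, only then read off the $s_0$-dependent constants $\al_i$) is exactly the point that makes the absorption legitimate.
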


\begin{proof}
Suppose first that $p>p_c$.  Then $\la_1< \la_2< \la_3$ by Lemma \ref{fact} and so
\begin{equation} \label{12}
Z(s) \leq C_1e^{\la_3s} + C_2 \int_{s_0}^s e^{\la_3(s-\tau)} \cdot |g(Y(\tau))| \,d\tau
\end{equation}
by \eqref{Zex1}.  Now, Lemma \ref{bm2} and our definition \eqref{g} ensure that
\begin{equation*}
\lim_{\tau\to\infty} \frac{g(Y(\tau))}{Y(\tau)} = g'(0) = 0.
\end{equation*}
Since $Z(s) \geq |\la_4 Y(s)|$ by equation \eqref{neat}, this trivially implies
\begin{equation*}
|g(Y(\tau))| \leq \frac{\de|\la_4Y(\tau)|}{C_2} \leq \frac{\de Z(\tau)}{C_2}
\end{equation*}
for all large enough $\tau$.  Inserting this estimate in \eqref{12}, we conclude that
\begin{equation}\label{13}
e^{-\la_3s} Z(s) \leq C_1 + \de \int_{s_0}^s e^{-\la_3 \tau} Z(\tau) \,d\tau
\end{equation}
for all large enough $s_0,s$.  Using Gronwall's inequality, we thus obtain \eqref{11}.

Suppose now that $p=p_c$, in which case $\la_1< \la_2=\la_3$ by Lemma \ref{fact}.  Using the exact same approach as above with
\eqref{Zex2} instead of \eqref{Zex1}, we now get
\begin{equation*}
e^{-\la_3 s} Z(s) \leq C_1s +  \frac{\de}{2} \int_{s_0}^s e^{-\la_3\tau} Z(\tau) \,d\tau +
\frac{\de^2}{2} \int_{s_0}^s (s-\tau) e^{-\la_3\tau} Z(\tau) \,d\tau
\end{equation*}
for all large enough $s_0,s$ in analogy with \eqref{13}.  Letting $F(s)$ denote the rightmost integral, one can express the
last equation in the form
\begin{equation}\label{14}
F''(s) \leq C_1s + \frac{\de}{2} F'(s) + \frac{\de^2}{2} F(s), \qquad F(s_0)=F'(s_0)=0.
\end{equation}
We note that $F,F'$ are non-negative, while $G(s) = F'(s) + \frac{\de}{2} F(s)$ is such that
\begin{equation*}
G'(s) - \de G(s) = F''(s) - \frac{\de}{2} F'(s) - \frac{\de^2}{2} F(s) \leq C_1s.
\end{equation*}
Multiplying by $e^{-\de s}$ and integrating, we now get
\begin{equation*}
F'(s) + \frac{\de}{2} F(s) = G(s) \leq C_1e^{\de s} \int_{s_0}^s \tau e^{-\de\tau} \,d\tau \leq C_2 e^{\de s}.
\end{equation*}
Since $F,F'$ are non-negative by above, we may thus recall \eqref{14} to conclude that
\begin{equation*}
e^{-\la_3s} Z(s) = F''(s) \leq C_1s + C_3e^{\de s}.
\end{equation*}
This trivially implies the desired \eqref{11} and also completes the proof.
\end{proof}

\begin{lemma}
Under the assumptions of the previous lemma, one has
\begin{equation}\label{Z1}
Z(s)= \left\{ \begin{array}{llc}
O(se^{\la_3s}) &\text{if\, $p=p_c$} \\
O(e^{\la_3s}) &\text{if\, $p>p_c$}
\end{array} \right\}.
\end{equation}
\end{lemma}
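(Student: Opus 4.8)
The plan is to bootstrap the rough estimate of Lemma~\ref{1st}.  The only loss there came from estimating $|g(Y)|$ by $\de|\la_4Y|\le\de Z$; but $g$ in fact vanishes to \emph{second} order at the origin, so this estimate can be squared, and one further pass through the integral representations of Lemma~\ref{expl} then recovers the sharp exponent $\la_3$, up to the polynomial factors forced by the coincidence of eigenvalues.

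First I would record a quadratic bound on the nonlinearity.  Differentiating the formula $g(Y)=(Y+L)^p-L^p-pL^{p-1}Y$ from \eqref{g} gives $g(0)=g'(0)=0$, and since $Y(\tau)\to0^-$ with $Y+L=r^m\phi>0$ by Lemma~\ref{bm2}, the function $g$ is of class $C^2$ on a neighbourhood of the relevant range of values of $Y$.  Hence Taylor's theorem yields a constant $C$ and some $\tau_0\in\R$ such that $|g(Y(\tau))|\le C\,Y(\tau)^2$ for all $\tau\ge\tau_0$.  Combining this with $Z(\tau)\ge|\la_4Y(\tau)|$ from \eqref{neat} and with Lemma~\ref{1st}, applied for a small $\de>0$ to be fixed below, we obtain
\begin{equation*}
|g(Y(\tau))|\le C'Z(\tau)^2=O\bigl(e^{(2\la_3+2\de)\tau}\bigr)\qquad\text{as\,}\tau\to\infty.
\end{equation*}
From now on fix $\de\in(0,|\la_3|/2)$, so that $2\la_3+2\de<\la_3<0$ and also $\la_3+2\de<0$.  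In the supercritical case I would substitute this bound into \eqref{Zex1}, where $\la_1<\la_2<\la_3<0$.  Each prefactor $\al_ie^{\la_is}$ is then $O(e^{\la_3s})$, and for the convolution terms I split $\int_{s_0}^s=\int_{s_0}^{\tau_0}+\int_{\tau_0}^s$: the first piece is $O(e^{\la_is})=O(e^{\la_3s})$, while on $[\tau_0,s]$ one has
\begin{equation*}
\int_{\tau_0}^s e^{\la_i(s-\tau)}e^{(2\la_3+2\de)\tau}\,d\tau=e^{\la_is}\int_{\tau_0}^s e^{(2\la_3+2\de-\la_i)\tau}\,d\tau=O(e^{\la_3s})
\end{equation*}
regardless of the sign of the exponent $2\la_3+2\de-\la_i$ (when it is positive the product telescopes to $O(e^{(2\la_3+2\de)s})=O(e^{\la_3s})$; when it is negative the integral is bounded and $\la_i\le\la_3$; the borderline case costs only a factor $s$, still absorbed since $\la_i<\la_3$).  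Hence $Z(s)=O(e^{\la_3s})$.

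The critical case is identical in spirit, now starting from \eqref{Zex2} with $\la_1<\la_2=\la_3$.  The term $\al_3se^{\la_3s}$ is already $O(se^{\la_3s})$, which is the target bound, and the only genuinely new contribution is the convolution against $(s-\tau)e^{\la_3(s-\tau)}$; there I would bound $s-\tau\le s$ on $[\tau_0,s]$ and use $\la_3+2\de<0$ to get $s\,e^{\la_3s}\int_{\tau_0}^s e^{(\la_3+2\de)\tau}\,d\tau=O(se^{\la_3s})$, all the remaining prefactor and convolution terms being $O(e^{\la_3s})$ exactly as in the previous paragraph.  This gives $Z(s)=O(se^{\la_3s})$.

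Once Lemma~\ref{1st} and the second-order vanishing of $g$ are in hand there is no real analytic difficulty; the step to watch is the bookkeeping above, namely choosing $\de$ uniformly (whence the constraint $\de<|\la_3|/2$) and checking that every prefactor and convolution term produced by the possibly-equal eigenvalues $\la_1\le\la_2\le\la_3$ is dominated by $e^{\la_3s}$, respectively $se^{\la_3s}$.
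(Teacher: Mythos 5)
Your proof is correct and takes essentially the same route as the paper's: both exploit the quadratic vanishing of $g$ at the origin via $|g(Y(\tau))| \le C Y(\tau)^2 \le C Z(\tau)^2 = O(e^{(2\la_3+2\de)\tau})$ from Lemma \ref{1st}, then feed this back into the integral representations of Lemma \ref{expl} to recover the sharp bound. The only cosmetic difference is that you handle each eigenvalue $\la_i$ separately (splitting at $\tau_0$ and checking the sign of $2\la_3+2\de-\la_i$), whereas the paper first dominates every convolution kernel $e^{\la_i(s-\tau)}$ by $e^{\la_3(s-\tau)}$ using $\la_i\le\la_3$ and $s\ge\tau$, reducing to a single Gronwall-free integral; both routes land on the same estimate.
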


\begin{proof}
Let us fix some $0< \de< |\la_3|/2$ and consider two cases.

\Case{1} When $p>p_c$, we use equation \eqref{Zex1} to get
\begin{equation*}
Z(s) \leq C_1 e^{\la_3 s} + C_2 \int_{s_0}^s e^{\la_3(s-\tau)} \cdot |g(Y(\tau))| \,d\tau.
\end{equation*}
According to \eqref{g} and \eqref{neat}, the rightmost factor in the integrand is bounded by
\begin{equation*}
|g(Y(\tau))| \leq CY(\tau)^2 \leq CZ(\tau)^2
\end{equation*}
for all large enough $\tau$.  Using this fact and the previous lemma, we find that
\begin{equation}\label{15}
e^{-\la_3 s} Z(s) \leq C_1 + C \int_{s_0}^s e^{(\la_3 +2\de)\tau} \,d\tau
\end{equation}
for all large enough $s_0,s$.  Since $2\de<-\la_3$, the result now follows.

\Case{2} When $p=p_c$, we use \eqref{Zex2} instead of \eqref{Zex1}.  Proceeding as above, one gets
\begin{align*}
e^{-\la_3 s} Z(s) \leq C_3s + C_3 \int_{s_0}^s e^{(\la_3 +2\de)\tau} \,d\tau + C_3
\int_{s_0}^s (s-\tau) e^{(\la_3 +2\de)\tau} \,d\tau
\end{align*}
instead of \eqref{15}.  Since $2\de<-\la_3$, the result follows as before.
\end{proof}

\begin{corollary}\label{2nd}
Under the assumptions of Lemma \ref{1st}, one also has
\begin{equation}\label{Y1}
Y(s)= \left\{ \begin{array}{llc}
O(se^{\la_3s}) &\text{if\, $p=p_c$} \\
O(e^{\la_3s}) &\text{if\, $p>p_c$}
\end{array} \right\}.
\end{equation}
\end{corollary}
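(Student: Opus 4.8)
The plan is to deduce the bound on $Y$ directly from the bound on $Z$ established in the previous lemma, using the sign information recorded in \eqref{neat}. Indeed, since $Y$ is negative and increasing by Lemma \ref{bm2}, equation \eqref{neat} tells us that
\begin{equation*}
Z(s) = |Y'(s)| + |\la_4 Y(s)| \geq |\la_4| \cdot |Y(s)|,
\end{equation*}
so that $|Y(s)| \leq |\la_4|^{-1} Z(s)$ for all $s$. Applying the previous lemma to the right-hand side immediately gives $Y(s) = O(se^{\la_3 s})$ when $p = p_c$ and $Y(s) = O(e^{\la_3 s})$ when $p > p_c$, which is exactly \eqref{Y1}. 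This is the whole argument; the substantive work has already been done in the preceding Gronwall-type estimates, and the corollary is just a readout of them through the pointwise inequality $Z \geq |\la_4 Y|$.

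As an alternative route (useful later for extracting sharper expansions), one can instead integrate the defining relation $Z(s) = Y'(s) - \la_4 Y(s)$. Here one must be slightly careful: since $\la_4 > 0$, integrating forward from a finite base point $s_0$ would introduce a spurious factor $e^{\la_4 s}$, so instead one integrates from $+\infty$, using that $Y(s)\to 0$ as $s\to\infty$ by \eqref{phia}. This yields
\begin{equation*}
Y(s) = -\int_s^\infty e^{\la_4(s-\tau)} Z(\tau)\,d\tau,
\end{equation*}
and since $\la_3 < \la_4$ the exponential weight $e^{(\la_3-\la_4)\tau}$ makes the integral converge with the estimate from the previous lemma plugged in, reproducing the same order $O(se^{\la_3 s})$ or $O(e^{\la_3 s})$. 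Either way there is no real obstacle here; the only point requiring a moment's thought is the direction of integration, forced by the positivity of $\la_4$.
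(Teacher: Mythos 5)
Your primary argument is correct and is in fact simpler than the paper's. The paper integrates the relation $Y' - \la_4 Y = Z$ from $+\infty$ (using $Y(s)\to 0$ as $s\to\infty$ by \eqref{phia}) to obtain the representation $Y(s) = -\int_s^\infty e^{\la_4(s-\tau)} Z(\tau)\,d\tau$, and then substitutes the bound \eqref{Z1} into this integral. You instead read the bound off directly from the pointwise inequality $Z(s) \geq |\la_4|\,|Y(s)|$, which is an immediate consequence of \eqref{neat}; that one-line deduction is valid, shorter, and requires no discussion of which way to integrate. What the paper's integral formula \eqref{Yex} buys is that it serves as the main engine in the proof of Theorem \ref{main}, where full asymptotic expansions of $Z$ (not just upper bounds) are repeatedly converted into expansions of $Y$, and your ``alternative route'' is precisely that argument: you have correctly identified both the formula and why the direction of integration is forced by $\la_4>0$. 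In short, your proposal is right, gives a more elementary primary argument for the corollary as stated, and also reproduces the paper's own proof as the second option.
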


\begin{proof}
Recall our definition \eqref{eq1} which reads
\begin{equation*}
\Bigl[ e^{-\la_4s} Y(s) \Bigr]' = e^{-\la_4s} Z(s)
\end{equation*}
for some $\la_4>0$.  Since $Y(s)\to 0$ as $s\to \infty$ by \eqref{phia}, we may then integrate to get
\begin{equation}\label{Yex}
Y(s) = -\int_s^\infty e^{\la_4(s-\tau)} Z(\tau) \,d\tau.
\end{equation}
Using the expansion \eqref{Z1} for $Z(\tau)$, we obtain the expansion \eqref{Y1} for $Y(s)$.
\end{proof}

\begin{rem}
In what follows, we shall frequently use the fact that
\begin{equation*}
\int_{s_0}^s O(e^{\mu\tau}) \,d\tau =
\int_{s_0}^\infty O(e^{\mu\tau}) \,d\tau - \int_s^\infty O(e^{\mu\tau}) \,d\tau = C_1 + O(e^{\mu s})
\end{equation*}
whenever $\mu<0$ as well as the analogous statement
\begin{equation*}
\int_{s_0}^s O(e^{\mu\tau}) \,d\tau = O(e^{\mu s})
\end{equation*}
whenever $\mu>0$.  Moreover, similar statements hold with $e^{\mu s}$ replaced by $se^{\mu s}$.
\end{rem}

\begin{proof_of}{Theorem \ref{main}}
Our assertions about the eigenvalues $\la_i$ and the critical values $p_i$ are basically facts about the quartic polynomial
\eqref{Q}, so we establish them separately in section~\ref{basic}, see Lemmas \ref{fact} and \ref{pis}, respectively.  In
what follows, we may thus focus solely on the asymptotic expansions stated in the theorem.

For part (a), we assume $p_k < p < p_{k+1}$.  In this case, we shall prove the expansions
\begin{equation}\label{a|1}
Y(s) = \sum_{j=1}^l a_j e^{j\la_3s} + O(e^{(l+1)\la_3s}), \qquad 0\leq l\leq k-1
\end{equation}
with the sum interpreted as zero when $l=0$, as well as the refined expansion
\begin{equation}\label{a|2}
Y(s) = \sum_{j=1}^k a_j e^{j\la_3s} + b_1 e^{\la_2s} + O(e^{(k+1)\la_3s}).
\end{equation}
Note that the former holds when $l=0$ by Corollary \ref{2nd}.  To establish them both at the same time, we will show that
\eqref{a|1} with $l< k-1$ implies \eqref{a|1} with $l+1$ and that \eqref{a|1} with $l=k-1$ implies \eqref{a|2}.

Suppose then that \eqref{a|1} holds for some $0\leq l\leq k-1$.  In view of \eqref{g}, the Taylor series expansion of $g$ near
$Y=0$ has the form
\begin{equation}\label{Tayl}
g(Y)= \sum_{j=2}^L d_jY^j + O \left( Y^{L+1} \right),
\end{equation}
where $L$ is an arbitrary positive integer.  We take $L=l+1$ and use \eqref{a|1} to get
\begin{equation*}
g(Y(s)) = \sum_{j=2}^{l+1} c_j e^{j\la_3s} + O \left( e^{(l+2)\la_3s} \right).
\end{equation*}
The corresponding expression for $Z(s)$ provided by Lemma \ref{expl} is
\begin{equation}\label{expZ}
Z(s) = \sum_{i=1}^3 \al_i e^{\la_i s} + \b_i \int_{s_0}^s e^{\la_i(s-\tau)} \cdot g(Y(\tau)) \,d\tau
\end{equation}
and we may combine the last two equations to arrive at
\begin{equation}\label{exp1}
Z(s) = \sum_{i=1}^3 \al_i e^{\la_i s} + \b_i e^{\la_is} \int_{s_0}^s
\left[ \sum_{j=2}^{l+1} c_j e^{(j\la_3-\la_i)\tau} + O
\left( e^{(l+2)\la_3\tau-\la_i\tau} \right) \right] d\tau.
\end{equation}
Since $p_k < p < p_{k+1}$ by assumption, Lemmas \ref{fact} and \ref{pis} ensure that
\begin{equation}\label{lais}
\la_1 < \la_2 + \la_3 < (k+1)\la_3 < \la_2 < k\la_3 <0.
\end{equation}
In particular, $j\la_3-\la_1$ is positive for each $j\leq l+2\leq k+1$, whereas $j\la_3-\la_2$ is positive when $j\leq k$ and
negative when $j\geq k+1$.  Thus, \eqref{exp1} leads to the expansion
\begin{equation*}
Z(s) = \sum_{j=1}^{l+1} a_j e^{j\la_3s} + O \left( e^{(l+2)\la_3s} \right),
\end{equation*}
if $l\leq k-2$, but it leads to the expansion
\begin{equation*}
Z(s) = \sum_{j=1}^k a_j e^{j\la_3s} + b_1 e^{\la_2 s} + O(e^{(k+1)\la_3s}),
\end{equation*}
if $l=k-1$.  In either case, a similar expansion is easily seen to hold for
\begin{equation*}
Y(s) = -\int_s^\infty e^{\la_4(s-\tau)} Z(\tau) \,d\tau
\end{equation*}
by \eqref{Yex}.  This shows that \eqref{a|1} with $l<k-1$ implies \eqref{a|1} with $l+1$ and that \eqref{a|1} with $l=k-1$
implies \eqref{a|2}.  In particular, \eqref{a|2} follows by induction.

We now repeat this argument to refine \eqref{a|2} even further.  As before, we insert \eqref{a|2} in \eqref{Tayl} with $L=k+1$
and we end up with
\begin{equation*}
g(Y(s)) = \sum_{j=2}^{k+1} c_j e^{j\la_3s} + O \left( e^{(\la_2+ \la_3)s} \right).
\end{equation*}
Inserting the last equation in \eqref{expZ} and recalling \eqref{lais}, we deduce that
\begin{equation*}
Z(s) = \sum_{j=1}^k a_j e^{j\la_3s} + b_1 e^{\la_2s} + a_{k+1}e^{(k+1)\la_3s} +  O \left( e^{(\la_2+ \la_3)s} \right).
\end{equation*}
By \eqref{Yex}, a similar expansion holds for $Y(s)$, so the expansion \eqref{main1} for part (a) follows.

For part (b), we assume $p=p_k$ with $k\geq 2$.  In this case, we shall similarly prove
\begin{equation}\label{b|1}
Y(s) = \sum_{j=1}^l a_j e^{j\la_3s} + O \left( e^{(l+1)\la_3s} \right), \qquad 0\leq l\leq k-2
\end{equation}
together with the refined expansion
\begin{equation}\label{b|2}
Y(s) = \sum_{j=1}^{k-1} a_j e^{j\la_3s} + O \left( se^{k\la_3s} \right).
\end{equation}
Note that the former holds when $l=0$ by Corollary \ref{2nd}.  Proceeding as before, we assume \eqref{b|1} holds for some
$0\leq l\leq k-2$ and insert \eqref{b|1} in \eqref{Tayl} with $L=l+1$.  As \eqref{b|1} is the same expansion that we had
before, we still end up with \eqref{exp1}, but we now have
\begin{equation}\label{lais2}
\la_1 < \la_2 + \la_3 = (k+1)\la_3 < \la_2 = k\la_3 <0
\end{equation}
instead of \eqref{lais}.  Assuming $l< k-2$, equation \eqref{exp1} leads to \eqref{b|1} with $l+1$ exactly as before.  If
$l=k-2$, on the other hand, then it leads to \eqref{b|2}.

We now repeat this argument to refine \eqref{b|2} even further.  Inserting \eqref{b|2} in \eqref{Tayl} with $L=k$, one finds
that
\begin{equation*}
g(Y(s)) = \sum_{j=2}^k c_j e^{j\la_3s} + O \left( se^{(k+1)\la_3s} \right).
\end{equation*}
We combine this fact with \eqref{expZ} and recall \eqref{lais2} to get
\begin{equation*}
Z(s) = \sum_{j=1}^{k-1} a_j e^{j\la_3s} + b_1 se^{k\la_3s} + a_ke^{k\la_3s} + O \left( se^{(k+1)\la_3s} \right).
\end{equation*}
Once again, this also implies the desired expansion \eqref{main2} for part (b).

For part (c), finally, we assume $p=p_1$.  In this case, \eqref{b|2} with $k=1$ is already known to hold by Corollary
\ref{2nd}. To refine this expansion, we insert it in \eqref{Tayl} with $L=1$ and proceed as before to find that
\begin{equation*}
Z(s) = b_1 se^{\la_3 s} + a_1 e^{\la_3s} + O \left( s^2e^{2\la_3s} \right).
\end{equation*}
Using this and \eqref{Tayl} with $L=2$, we may then repeat the same approach once more to end up with the expansion
\eqref{main3} which is stated in the theorem.
\end{proof_of}

\section{Useful facts}\label{basic}
In this section, we gather some basic facts related to the quartic polynomial \eqref{Q} in the supercritical case $p\geq
p_c>\frac{n+4}{n-4}$, in which case
\begin{equation} \label{las}
\la_* \equiv \frac{4}{p-1} - \frac{n-4}{2} < 0.
\end{equation}

\begin{lemma}\label{fact}
Suppose $n\geq 13$ and $p\geq p_c$.  If $m=\frac{4}{p-1}$ and $Q_4$ is the quartic \eqref{Q}, then
\begin{equation}\label{pol}
\mathscr{P}(\la) = Q_4(m-\la) -pQ_4(m)
\end{equation}
has four real roots $\la_1 < 2\la_* < \la_2 \leq \la_* \leq \la_3 < 0 < \la_4$ and those are such that
\begin{equation}\label{roots}
\la_1 + \la_ 4 = \la_2 + \la_3 = 2\la_*.
\end{equation}
Moreover, a double root arises if and only if $p=p_c$, in which case $\la_2=\la_3=\la_*$.
\end{lemma}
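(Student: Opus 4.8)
The plan is to uncover a reflection symmetry of $\mathscr{P}$ that collapses the quartic to a quadratic in $\sigma^2$, where $\sigma=\la-\la_*$. First I would record that $Q_4$, whose roots are $0,-2,n-4,n-2$, is symmetric about $\alpha=\frac{n-4}{2}$; pairing the roots $\{0,n-4\}$ and $\{-2,n-2\}$ gives
\[
Q_4\Bigl(\frac{n-4}{2}+\beta\Bigr)=\Bigl(\beta^2-\frac{(n-4)^2}{4}\Bigr)\Bigl(\beta^2-\frac{n^2}{4}\Bigr).
\]
Since \eqref{las} says $m-\la_*=\frac{n-4}{2}$, substituting $\la=\la_*+\sigma$ into $\mathscr{P}(\la)=Q_4(m-\la)-pQ_4(m)$ yields $\mathscr{P}(\la_*+\sigma)=P(\sigma^2)$ with
\[
P(u)=\Bigl(u-\frac{(n-4)^2}{4}\Bigr)\Bigl(u-\frac{n^2}{4}\Bigr)-pQ_4(m).
\]
In particular $\mathscr{P}(2\la_*-\la)=\mathscr{P}(\la)$, so the roots of $\mathscr{P}$ come in pairs summing to $2\la_*$; this is \eqref{roots}, granted the ordering established below.

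Next I would analyze the quadratic $P$. Its roots $u_-\le u_+$ satisfy $u_-+u_+=\frac{(n-4)^2+n^2}{4}>0$, $u_-u_+=Q_4\bigl(\frac{n-4}{2}\bigr)-pQ_4(m)=:C$, and discriminant $4\bigl[(n-2)^2+pQ_4(m)\bigr]$ (using $\frac{(n-4)^2-n^2}{4}=-(2n-4)$). Because $p\ge p_c>\frac{n+4}{n-4}$ forces $m\in(0,\frac{n-4}{2})$ by \eqref{las}, a sign check of the four factors of $Q_4(m)$ gives $Q_4(m)>0$; hence the discriminant is strictly positive, so $u_-<u_+$ are real and distinct, $u_+>0$ always, and $u_-\ge 0\iff C\ge 0$. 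Moreover $P(\la_*^2)=\mathscr{P}(0)=(1-p)Q_4(m)<0$, so $u_-<\la_*^2<u_+$. The roots of $\mathscr{P}$ are $\la_*\pm\sqrt{u_\pm}$, which are all real exactly when $C\ge 0$; in that case, writing $\la_1=\la_*-\sqrt{u_+}$, $\la_2=\la_*-\sqrt{u_-}$, $\la_3=\la_*+\sqrt{u_-}$, $\la_4=\la_*+\sqrt{u_+}$, the chain $0\le u_-<\la_*^2<u_+$ together with $\la_*<0$ delivers precisely $\la_1<2\la_*<\la_2\le\la_*\le\la_3<0<\la_4$. Since $u_-<u_+$, the only coincidence that can occur among these four numbers is $\la_2=\la_3$, and that happens iff $\sqrt{u_-}=0$, i.e. $C=0$, in which case $\la_2=\la_3=\la_*$.

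It remains to show $C=Q_4\bigl(\frac{n-4}{2}\bigr)-pQ_4(m)\ge 0$ for every $p\ge p_c$, with equality only at $p=p_c$. Here I would reparametrize by $m\in(0,\frac{n-4}{2})$ through $p=1+\frac4m$, so that
\[
pQ_4(m)=\frac{m+4}{m}\,m(m+2)(m+2-n)(m+4-n)=(m+4)(m+2)(m+2-n)(m+4-n)=:f(m),
\]
a quartic symmetric about $m=\frac{n-6}{2}$: with $\beta=m-\frac{n-6}{2}$ one gets $f=\bigl(\beta^2-\frac{(n+2)^2}{4}\bigr)\bigl(\beta^2-\frac{(n-2)^2}{4}\bigr)$ and hence $f'(m)=2\beta\bigl(2\beta^2-\frac{n^2+4}{2}\bigr)$. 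For $m\in(0,\frac{n-6}{2})$ one has $\beta\in(-\frac{n-6}{2},0)$ and $2\beta^2<\frac{(n-6)^2}{2}<\frac{n^2+4}{2}$, so $f'>0$; for $m\in(\frac{n-6}{2},\frac{n-4}{2})$ one has $\beta\in(0,1)$, so $f'<0$. Since $f\bigl(\frac{n-4}{2}\bigr)=\frac{n+4}{n-4}Q_4\bigl(\frac{n-4}{2}\bigr)>Q_4\bigl(\frac{n-4}{2}\bigr)$, the function $g(m):=f(m)-Q_4\bigl(\frac{n-4}{2}\bigr)=-C$ is positive on $[\frac{n-6}{2},\frac{n-4}{2})$ and strictly increasing on $(0,\frac{n-6}{2})$. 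As $p_c<\infty$ for $n\ge 13$ (introduction; equivalently $g(0)=\frac{n-4}{16}\bigl(128(n-2)-n^2(n-4)\bigr)<0$), $g$ has a unique zero $m_c\in(0,\frac{n-6}{2})$, negative to its left and positive to its right. Since the defining inequality \eqref{pc4} of $p_c$ is exactly $g(m)>0$, this gives $p_c=1+\frac4{m_c}$, whence $p\ge p_c\iff m\le m_c\iff g(m)\le 0\iff C\ge 0$, with all the equalities occurring simultaneously; combining with the previous paragraph finishes the proof.

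The step I expect to be the real obstacle is this last one: the definition of $p_c$ pins down only the subcritical range directly, so one genuinely has to run the monotonicity analysis of $f$ to exclude $C$ vanishing or changing sign again for $p>p_c$. Once the reflection symmetry of $\mathscr{P}$ is spotted, the passage to $P$ and the extraction of the ordering and of the double-root criterion are entirely mechanical.
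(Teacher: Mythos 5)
Your proposal is correct and follows essentially the same route as the paper: both proofs rest on the symmetry of $Q_4$ about $\frac{n-4}{2}$ (hence of $\mathscr{P}$ about $\la_*$) and on checking the signs of $\mathscr{P}$ at $0$, $\la_*$ and $\pm\infty$; your reduction to the quadratic $P(\sigma^2)$ with $\sigma=\la-\la_*$ is just a more explicit packaging of the paper's symmetry plus intermediate value argument, and it produces the same ordering $\la_1<2\la_*<\la_2\le\la_*\le\la_3<0<\la_4$ and the same pairing \eqref{roots}. The one place you genuinely go beyond the paper is the final paragraph: the paper simply asserts $\mathscr{P}(\la_*)\geq 0$ ``because $p\geq p_c$,'' whereas you correctly flag that the definition \eqref{pc4} of $p_c$ only fixes the sign of $Q_4\!\left(\frac{n-4}{2}\right)-pQ_4\!\left(\frac{4}{p-1}\right)$ on the interval $\left(\frac{n+4}{n-4},p_c\right)$, not a priori on $[p_c,\infty)$, and you close the gap by passing to $m=\frac{4}{p-1}\in\left(0,\frac{n-4}{2}\right)$, noting that $f(m)=(m+4)(m+2)(m+2-n)(m+4-n)$ is symmetric about $\frac{n-6}{2}$ and unimodal there, so $g(m)=f(m)-Q_4\!\left(\frac{n-4}{2}\right)$ changes sign exactly once; since $m$ decreases in $p$, this forces $\mathscr{P}(\la_*)\geq 0$ on all of $p\geq p_c$ with equality only at $p=p_c$. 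That supplementary monotonicity check is a small but genuine improvement in rigor over the paper's one-line assertion.
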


\begin{proof}
Noting that $Q_4$ is symmetric about $\frac{n-4}{2}$, we see that $\mathscr{P}$ is symmetric about
\begin{equation*}
\la_* \equiv m- \frac{n-4}{2} = \frac{4}{p-1} - \frac{n-4}{2} < 0.
\end{equation*}
In addition, we have
\begin{equation}\label{pol1}
\lim_{\la\to \pm\infty} \mathscr{P}(\la) = + \infty, \qquad
\mathscr{P}(2\la_*) = \mathscr{P}(0) = (1-p) \cdot Q_4(m) < 0
\end{equation}
and we also have
\begin{equation}\label{pol2}
\mathscr{P}(\la_*) = Q_4\left( \frac{n-4}{2} \right) - p\cdot Q_4\left( \frac{4}{p-1} \right) \geq 0
\end{equation}
because $p\geq p_c$.  This forces $\mathscr{P}(\la)$ to have at least one root in each of the intervals
\begin{equation*}
(-\infty,2\la_*), \qquad (2\la_*,\la_*], \qquad [\la_*,0), \qquad (0,\infty).
\end{equation*}
If $\la_*$ happens to be a root, then it must be a double root by symmetry; this is only the case when equality holds in
\eqref{pol2}, namely when $p=p_c$.  As for our assertion \eqref{roots}, this also follows by symmetry because $\la$ is a root
of $\mathscr{P}$ if and only if $2\la_*-\la$ is.
\end{proof}

\begin{lemma}\label{p1}
Suppose $n\geq 13$ and $p\geq p_c$.  Let $\la_2\leq \la_3<0$ be as in Lemma \ref{fact} and let $Q_4$ be the quartic polynomial
\eqref{Q}.  Given any integer $k\geq 1$, we then have
\begin{equation}\label{rel}
\la_2 > k\la_3 \quad\iff\quad \mathscr{R}_k(p) < 0,
\end{equation}
where $\mathscr{R}_k(p)$ is the polynomial defined by
\begin{equation}\label{Rk}
\mathscr{R}_k(p) = (p-1)^4 \left[ Q_4 \left( \frac{k-1}{k+1} \cdot \frac{4}{p-1} + \frac{n-4}{k+1} \right)
- pQ_4 \left( \frac{4}{p-1} \right) \right].
\end{equation}
\end{lemma}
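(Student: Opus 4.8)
The plan is to reduce the inequality $\la_2 > k\la_3$ to a single evaluation of the quartic $\mathscr{P}$ from Lemma~\ref{fact}. First I would use the identity $\la_2+\la_3 = 2\la_*$ from \eqref{roots} to replace $\la_2$ by $2\la_*-\la_3$, so that $\la_2 > k\la_3$ is equivalent to $2\la_* > (k+1)\la_3$, that is, to $\la_3 < \mu_k$, where $\mu_k \equiv \frac{2\la_*}{k+1}$ (here I use $k+1>0$). Since $\la_* < 0$ by \eqref{las} and $k\geq 1$, an elementary computation gives $\la_* \le \mu_k < 0$, with $\mu_1 = \la_*$; in particular $\mu_k \in [\la_*,0)$.

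Next I would determine the sign of $\mathscr{P}$ on the interval $[\la_*,0)$. By Lemma~\ref{fact}, $\mathscr{P}$ is a quartic with positive leading coefficient whose only real roots are $\la_1<\la_2\le\la_3<\la_4$, and $[\la_*,0)\subseteq[\la_2,\la_4)$ since $\la_2\le\la_*$ and $0<\la_4$. From the usual sign pattern of such a quartic, $\mathscr{P}\ge 0$ on $[\la_2,\la_3]$ and $\mathscr{P}<0$ on $(\la_3,\la_4)$; hence, for any $\la\in[\la_*,0)$, one has $\mathscr{P}(\la)<0$ precisely when $\la>\la_3$. Combining this with the first paragraph, $\la_2 > k\la_3 \iff \la_3<\mu_k \iff \mathscr{P}(\mu_k)<0$.

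Finally I would identify $\mathscr{P}(\mu_k)$ with $\mathscr{R}_k(p)$ up to a positive factor. Writing $m=\frac{4}{p-1}$ and recalling $\la_*=m-\frac{n-4}{2}$, a direct computation yields
\[
m-\mu_k \;=\; m-\frac{2\la_*}{k+1} \;=\; \frac{(k-1)m+(n-4)}{k+1} \;=\; \frac{k-1}{k+1}\cdot\frac{4}{p-1}+\frac{n-4}{k+1},
\]
which is exactly the argument of $Q_4$ appearing in \eqref{Rk}. Thus $\mathscr{P}(\mu_k)=Q_4(m-\mu_k)-pQ_4(m)$ equals the bracketed expression in \eqref{Rk}, and since $(p-1)^4>0$ we obtain $\mathscr{R}_k(p)=(p-1)^4\,\mathscr{P}(\mu_k)$, so that $\mathscr{R}_k(p)<0 \iff \mathscr{P}(\mu_k)<0 \iff \la_2>k\la_3$, as claimed.

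The only step requiring real care is the second paragraph: one must invoke Lemma~\ref{fact} in full to know that $[\la_*,0)$ contains no root of $\mathscr{P}$ other than possibly $\la_3$ (this uses the location $\la_2\le\la_*\le\la_3<0<\la_4$ of all four roots), so that the sign of $\mathscr{P}$ on that whole interval is governed solely by comparison with $\la_3$. Everything else is routine bookkeeping with $\la_*<0$ and the explicit form \eqref{Q} of $Q_4$.
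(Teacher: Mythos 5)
Your proposal is correct and follows essentially the same route as the paper: convert $\la_2>k\la_3$ to $\la_3<\mu_k$ via the symmetry identity \eqref{roots}, locate $\mu_k$ in $[\la_*,0)$, use the sign behavior of $\mathscr{P}$ on that interval (the paper phrases this via $\mathscr{P}(\la_*)\geq 0>\mathscr{P}(0)$ and uniqueness of the root $\la_3$ there, while you spell out the full sign pattern of the quartic, but it is the same fact), and then identify $\mathscr{P}(\mu_k)$ with the bracketed expression in \eqref{Rk}.
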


\begin{proof}
First of all, we use our identity \eqref{roots} to find that
\begin{equation*}
\la_2 > k\la_3 \quad\iff\quad 2\la_* = \la_2 + \la_3 > (k+1)\la_3,
\end{equation*}
where $\la_*<0$ is defined by \eqref{las}.  According to Lemma \ref{fact}, $\la_3$ is the unique root of
\begin{equation*}
\mathscr{P}(\la) = Q_4 \left( \frac{4}{p-1} -\la \right) -p Q_4 \left( \frac{4}{p-1} \right)
\end{equation*}
that lies in the interval $[\la_*,0)$.  Since $k\geq 1$ by assumption, $\frac{2\la_*}{k+1}$ also lies in that interval, while
equations \eqref{pol1} and \eqref{pol2} give $\mathscr{P}(\la_*)\geq 0> \mathscr{P}(0)$.  In particular, we have
\begin{equation*}
\la_2 > k\la_3 \quad\iff\quad \frac{2\la_*}{k+1} > \la_3 \quad\iff\quad \mathscr{P} \left( \frac{2\la_*}{k+1} \right) < 0,
\end{equation*}
where $\la_*= \frac{4}{p-1} - \frac{n-4}{2}$ by definition \eqref{las}, so the desired condition \eqref{rel} follows.
\end{proof}

\begin{lemma}\label{p2}
Suppose $n\geq 13$ and $p\geq p_c$.  Let $k>1$ be an integer, let $\mathscr{R}_k(p)$ denote the polynomial \eqref{Rk} of the
previous lemma, and let $L$ denote the limit
\begin{equation}\label{lim}
L= \lim_{p\to \pm\infty} \frac{\mathscr{R}_k(p)}{p^4} = Q_4 \left( \frac{n-4}{k+1} \right) - 8(n-2)(n-4).
\end{equation}

\begin{itemize}
\item[(a)]
If $L\leq 0$, then $\mathscr{R}_k(p)$ is negative on $[p_c,\infty)$.
\item[(b)]
If $L>0$, then $\mathscr{R}_k(p)$ has a unique root $p_k>p_c$ and it is negative on $[p_c,p_k)$ but positive on
$(p_k,\infty)$.
\end{itemize}
\end{lemma}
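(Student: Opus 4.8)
The strategy is to turn the condition $\la_2\le k\la_3$ into the positivity of one quartic polynomial in a single variable. With $m=\tfrac4{p-1}$ and $\la_*$ as in \eqref{las}, put $\mu=-\la_*=\tfrac{n-4}{2}-\tfrac4{p-1}$, so $\mu$ ranges over $(0,\tfrac{n-4}{2})$, and over $[\mu_c,\tfrac{n-4}{2})$ as $p$ runs over $[p_c,\infty)$, where $\mu_c=-\la_*(p_c)\in(0,\tfrac{n-4}{2})$. Exactly as in the proof of Lemma \ref{p1} one has $\mathscr{R}_k(p)=(p-1)^4\,\mathscr{P}\bigl(\tfrac{2\la_*}{k+1}\bigr)$. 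Since $\mathscr{P}$ is the quartic symmetric about $\la_*$ whose two middle roots are $\la_2,\la_3=\la_*\mp y$ with $y=y(p)\ge 0$, the relation $\la_2\le k\la_3$ is equivalent to $y\ge\tfrac{k-1}{k+1}\mu$. Solving $\mathscr{P}(\la_*\mp y)=0$ for $y^2$ and squaring the resulting inequality converts this into $P(\mu)\ge0$, where $P$ is an explicit polynomial of degree $4$ with leading coefficient $-(1-\kappa^2)$, $\kappa=\bigl(\tfrac{k-1}{k+1}\bigr)^2$; concretely one obtains the identity $\mathscr{R}_k(p)=(p-1)^4P(\mu)$, so the sign of $\mathscr{R}_k(p)$ agrees with that of $P(\mu)$.

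I would then record the three relevant values of $P$. At $p=p_c$ Lemma \ref{fact} gives $\la_2=\la_3<0$, hence $\la_2>k\la_3$ because $k>1$, so Lemma \ref{p1} yields $P(\mu_c)=\mathscr{R}_k(p_c)/(p_c-1)^4<0$. Letting $p\to\infty$ (so $m\to0$, $\mu\to\tfrac{n-4}{2}$) one computes that $P\bigl(\tfrac{n-4}{2}\bigr)$ equals the number $L$ of \eqref{lim}. Finally $P(0)=-4\cdot\tfrac{n-4}{2}\cdot\bigl(\tfrac n2\bigr)^2<0$.

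It remains to show that $P$ changes sign at most once on $[\mu_c,\tfrac{n-4}{2})$, and then only from negative to positive; equivalently, that at every zero $\mu_0\in(\mu_c,\tfrac{n-4}{2}]$ of $P$ one has $P'(\mu_0)>0$. If this holds, then since $P(\mu_c)<0$ the set $\{\mu\in[\mu_c,\tfrac{n-4}{2}):P(\mu)\ge0\}$ is either empty or an interval $[\mu_k,\tfrac{n-4}{2})$: a first zero would be a simple up‑crossing, and afterwards $P$ could not return to $0$, since that would force a down‑crossing. In view of the Euler‑type identity $\mu P'(\mu)=4P(\mu)+2W(\mu)$ — valid for all $\mu$, with $W$ an explicit cubic of negative leading coefficient — the inequality $P'(\mu_0)>0$ at a zero is equivalent to $W(\mu_0)>0$, which must be checked by using the defining relation $P(\mu_0)=0$ (equivalently $Q_4(\b_k)=pQ_4(m)$, $\b_k=\tfrac{k-1}{k+1}m+\tfrac{n-4}{k+1}$) to eliminate the quartic term. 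This reduces $P'(\mu_0)>0$ to a polynomial inequality in $\mu_0$, with parameters $n\ge13$ and $k\ge2$, subject to one polynomial constraint; carrying it out — using $\tfrac{n-4}{2}\ge\tfrac92$ and $1-\kappa=\tfrac{4k}{(k+1)^2}\le\tfrac89$ — is the single genuine computation and the main obstacle.

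Granting this, both cases follow. If $L>0$, then $P\bigl(\tfrac{n-4}{2}\bigr)=L>0>P(\mu_c)$ forces a zero $\mu_k\in(\mu_c,\tfrac{n-4}{2})$, and by the above $P<0$ on $[\mu_c,\mu_k)$ and $P>0$ on $(\mu_k,\tfrac{n-4}{2})$; translating back through $\mu\mapsto p$ gives a unique $p_k>p_c$ with $\mathscr{R}_k<0$ on $[p_c,p_k)$ and $\mathscr{R}_k>0$ on $(p_k,\infty)$, which is (b). If $L\le0$, a zero of $P$ in $(\mu_c,\tfrac{n-4}{2})$ would be an up‑crossing which, because $P\bigl(\tfrac{n-4}{2}\bigr)=L\le0$, must be followed by a down‑crossing in $(\mu_c,\tfrac{n-4}{2}]$ — impossible by the above; hence $P<0$ on all of $[\mu_c,\tfrac{n-4}{2})$, i.e.\ $\mathscr{R}_k(p)<0$ for every $p\in[p_c,\infty)$, which is (a).
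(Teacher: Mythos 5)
Your proposal has a genuine gap at the crux of the argument. After the change of variable to $\mu=-\la_*$ and the reduction to a single quartic $P(\mu)$, everything hinges on the claim that every zero $\mu_0\in(\mu_c,\tfrac{n-4}{2}]$ of $P$ satisfies $P'(\mu_0)>0$. You identify this as ``the single genuine computation and the main obstacle'' but do not carry it out; you only sketch how one might attack it via an Euler-type identity $\mu P'(\mu)=4P(\mu)+2W(\mu)$ and then note that $W(\mu_0)>0$ ``must be checked.'' Without that inequality proven, the sign pattern of $P$ on $[\mu_c,\tfrac{n-4}{2})$ is not established and neither (a) nor (b) follows. Moreover, it is not a priori clear that the envisaged polynomial inequality in $\mu_0$ (under the constraint $P(\mu_0)=0$, with parameters $n\geq13$, $k\geq2$) is tractable by the crude bounds you mention; since $W$ has negative leading coefficient, some genuine cancellation is needed, and you supply no evidence that it goes through.

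The paper avoids this difficulty entirely by a global root-count on the quartic $\mathscr{R}_k(p)$ rather than a local analysis at its zeros. It evaluates $\mathscr{R}_k$ at $1$, $\tfrac{n}{n-4}$, and $p_c$ to place two roots in $(1,p_c)$, and then splits into $n\leq 2(k+1)$ (two further roots in $[-1,1)$, so all four roots are below $p_c$ and $\mathscr{R}_k<0$ on $[p_c,\infty)$, forcing $L<0$) versus $n>2(k+1)$ (one further root in $(-1,1)$, so the fourth root lies above $p_c$ iff $L>0$). Since a quartic has at most four roots, once three (resp.\ four) of them are pinned down, the sign of the leading coefficient $L$ determines everything. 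This sidesteps any monotonicity-at-zeros estimate. If you want to pursue your route, you would need to actually produce and verify the polynomial inequality $W(\mu_0)>0$; as it stands the proof is incomplete.
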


\begin{proof}
First, we show that $\mathscr{R}_k(p)$ has two roots in $(1,p_c)$ by showing that
\begin{equation}\label{R1}
\mathscr{R}_k(1) < 0, \qquad \mathscr{R}_k \left( \frac{n}{n-4} \right) > 0, \qquad \mathscr{R}_k(p_c) < 0.
\end{equation}
Using our definition \eqref{Q}, one can easily check that
\begin{equation}\label{ap}
\lim_{p\to 1} \,(p-1)^4 \cdot Q_4 \left( \frac{a}{p-1} \right) = a^4
\end{equation}
for any $a\in\R$ whatsoever; in view of our definition \eqref{Rk}, this also implies
\begin{equation*}
\mathscr{R}_k(1) = 4^4 \left( \frac{k-1}{k+1} \right)^4 - 4^4 < 0.
\end{equation*}
Next, we note that $Q_4$ is positive on $(0,n-4)$ with $Q_4(n-4)=0$, hence
\begin{align*}
\mathscr{R}_k \left( \frac{n}{n-4} \right) &= \left( \frac{4}{n-4} \right)^4 \cdot Q_4 \left( \frac{k(n-4)}{k+1} \right) >0.
\end{align*}
To show that $\mathscr{R}_k(p_c) < 0$, finally, we combine Lemma \ref{fact} with \eqref{rel}.  When $p=p_c$, the lemma gives
$\la_2=
\la_3<0$, hence $\la_2>k\la_3$ and so $\mathscr{R}_k(p_c) < 0$ by \eqref{rel}.

This completes the proof of \eqref{R1}, which implies that $\mathscr{R}_k(p)$ has two roots in $(1,p_c)$.  To find the
location of the remaining roots, we shall now have to distinguish two cases.

\Case{1} When $n\leq 2(k+1)$, there are two additional roots in $[-1,1)$ because
\begin{equation}\label{R2}
\mathscr{R}_k(-1)\leq 0, \qquad \mathscr{R}_k(-1/3) > 0, \qquad \mathscr{R}_k(1)<0.
\end{equation}
Assuming this statement for the moment, $\mathscr{R}_k$ has no roots in $[p_c,\infty)$, so it must be negative there by
\eqref{R1}, and thus the limit \eqref{lim} is non-positive; in fact, the limit has to be negative, as it can only be zero when
$\mathscr{R}_k$ is a cubic instead of a quartic.

To finish the proof for this case, we now establish \eqref{R2}.  First of all, we have
\begin{equation*}
\mathscr{R}_k(-1) = 2^4 \cdot Q_4 \left( \frac{n}{k+1} - 2 \right) \leq 0
\end{equation*}
because $Q_4$ is non-positive on $(-2,0]$.  Since $\mathscr{R}_k(1)<0$ by \eqref{R1}, it remains to show
\begin{align*}
(3/4)^4 \cdot \mathscr{R}_k(-1/3) &= Q_4\left( \frac{n+2}{k+1} - 3 \right) + \frac{Q_4(-3)}{3} \\
&= Q_4\left( \frac{n}{k+1} - 2 - \frac{k-1}{k+1} \right) + n^2-1
\end{align*}
is positive.  In particular, it suffices to show that
\begin{equation}
Q_4(x) + n^2-1 > 0 \quad \text{whenever\, $-3< x<0$.}
\end{equation}
Since $Q_4$ is positive on $(-3,-2)$, we may assume $-2\leq x<0$.  Then
\begin{equation*}
x(x+2) \geq -1 \quad\Longrightarrow\quad Q_4(x)\geq -(x+2-n)(x+4-n)
\end{equation*}
and the rightmost quadratic is increasing on $(-\infty,0)$, so we easily get
\begin{equation*}
Q_4(x)\geq n(2-n) > 1-n^2.
\end{equation*}

\Case{2} When $n>2(k+1)$, there is one additional root in $(-1,1)$ because
\begin{equation}\label{R3}
\mathscr{R}_k(-1)> 0, \qquad \mathscr{R}_k(1) < 0.
\end{equation}
This follows easily by \eqref{R1} and the fact that $Q_4$ is positive on $(0,n-4)$, which gives
\begin{equation*}
\mathscr{R}_k(-1) = 2^4 \cdot Q_4 \left( \frac{n}{k+1} - 2 \right) > 0
\end{equation*}
for this case.  In view of \eqref{R1}, we now know that $\mathscr{R}_k$ has three roots in $(-1,p_c)$, being positive at the
left endpoint and negative at the right endpoint.  If the limit \eqref{lim} is positive, then $\mathscr{R}_k$ is positive as
$p\to\infty$, so the fourth root lies in $(p_c,\infty)$; if the limit is zero, then $\mathscr{R}_k$ is a cubic, so it has no
other roots; and if the limit is negative, then $\mathscr{R}_k$ is negative as $p\to -\infty$, so the fourth root lies in
$(-\infty,-1)$.  The result now follows.
\end{proof}

\begin{lemma}\label{pis}
Suppose $n\geq 13$ and $p\geq p_c$.  Let $\la_2\leq \la_3<0$ be as in Lemma \ref{fact}.  Then there exists a finite sequence
$p_c= p_1 < p_2 < \cdots < p_N$ such that $\la_2 > (N+1)\la_3$ and
\begin{equation*}
\la_2 \leq k\la_3 \quad\iff\quad p\geq p_k
\end{equation*}
for each $1\leq k\leq N$.  Moreover, if $\lfloor x\rfloor$ denotes the greatest integer in $x$, then the length of this finite
sequence is
\begin{equation*}
N= \left\{ \begin{array}{llc}
\lfloor \frac{n-10}{2}\rfloor &&\text{when\, $13\leq n\leq 19$}\\ \\
\lfloor \frac{n-9}{2}\rfloor &&\text{when\, $n\geq 20$}
\end{array}\right\}.
\end{equation*}
\end{lemma}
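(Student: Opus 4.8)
The plan is to reduce everything to counting, for each fixed $p\geq p_c$, how many positive integers $k$ satisfy $\la_2\leq k\la_3$, and then to convert this count into the length $N$ of the sequence $p_1<\cdots<p_N$. By Lemma \ref{p1}, the condition $\la_2>k\la_3$ is equivalent to $\mathscr{R}_k(p)<0$, and by Lemma \ref{p2} the sign behaviour of $\mathscr{R}_k$ on $[p_c,\infty)$ is governed entirely by the limit $L=L(k)=Q_4\bigl(\frac{n-4}{k+1}\bigr)-8(n-2)(n-4)$. Thus the first step is to observe that a genuine threshold $p_k>p_c$ exists (i.e. $\la_2\leq k\la_3$ holds for $p$ large) precisely when $L(k)>0$, and that the set of such $k$ is an initial segment $\{1,\dots,N\}$: indeed $\frac{n-4}{k+1}$ is decreasing in $k$, and on the relevant range $Q_4$ is increasing, so $L(k)$ is decreasing in $k$; hence $L(k)>0$ for $k\leq N$ and $L(k)\leq 0$ for $k>N$, where $N$ is the largest $k$ with $L(k)>0$. (One should also check $L(1)>0$, so that $p_1$ exists; since $p_1=p_c$ by the remark below, this is consistent.) The monotonicity $p_1<p_2<\cdots<p_N$ then follows because for $k<k'$ one has $\mathscr{R}_{k'}(p_k)$ of a sign forcing the root $p_{k'}$ to lie to the right of $p_k$ — concretely, at $p=p_k$ we have $\la_2=k\la_3>k'\la_3$, so $\mathscr{R}_{k'}(p_k)<0$ by Lemma \ref{p1}, and since $\mathscr{R}_{k'}$ is negative on $[p_c,p_{k'})$ and positive afterwards, this gives $p_k<p_{k'}$. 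The identity $p_1=p_c$ comes from Lemma \ref{fact}: at $p=p_c$ one has $\la_2=\la_3$, i.e. $\la_2\leq 1\cdot\la_3$, while for $p>p_c$ one has $\la_2<\la_3<0$, so $\la_2>\la_3$; hence the threshold for $k=1$ is exactly $p_c$.

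The second and computationally heavier step is to pin down $N$ explicitly. By the above, $N$ is the largest positive integer $k$ for which $Q_4\bigl(\frac{n-4}{k+1}\bigr)>8(n-2)(n-4)$. Writing $t=\frac{n-4}{k+1}$ and expanding $Q_4(t)=t(t+2)(t+2-n)(t+4-n)$, this is an inequality that, after clearing the factor $(n-4)$ and simplifying, becomes a polynomial inequality in $k$ (of low degree, with coefficients polynomial in $n$). I would solve this inequality by treating $k$ as a real variable, locating the largest real root $k_0(n)$ of the associated equation, and then setting $N=\lfloor k_0(n)\rfloor$. The case split between $13\leq n\leq 19$ and $n\geq 20$ should emerge from the position of $k_0(n)$ relative to integers — essentially from whether a certain correction term pushes the root past a half-integer — yielding $N=\lfloor\frac{n-10}{2}\rfloor$ in the first range and $N=\lfloor\frac{n-9}{2}\rfloor$ in the second. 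A clean way to organize this is: show $k=N$ satisfies $L(N)>0$ and $k=N+1$ satisfies $L(N+1)\leq 0$, where $N$ is given by the stated formula; this reduces the whole step to verifying two explicit polynomial inequalities in $n$ on the relevant integer ranges, which can be done by estimating $Q_4$ at the relevant arguments (using, as in Lemma \ref{p2}, that $Q_4$ is increasing on $(\frac{n-4}{2},\infty)$ shifted appropriately, or by direct factorization). Finally, the assertion $\la_2>(N+1)\la_3$ is just the statement that $\mathscr{R}_{N+1}(p)<0$ for all $p\geq p_c$, which is part (a) of Lemma \ref{p2} applied with $k=N+1$ (valid since $L(N+1)\leq 0$).

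The main obstacle is the explicit determination of $N$ in the second step: verifying that $\frac{n-10}{2}$ versus $\frac{n-9}{2}$ is the correct floor requires carefully tracking how the argument $\frac{n-4}{k+1}$ interacts with the roots $0,-2,n-2,n-4$ of $Q_4$ as $k$ varies, and the threshold $n=20$ suggests a somewhat delicate parity/rounding phenomenon rather than a clean closed form. I would handle this by substituting $k+1=\frac{n-4}{t}$ and analyzing the inequality $Q_4(t)>8(n-2)(n-4)$ for $t$ in a small interval near the critical value (which is near $t=2$, comparably to the endpoint analysis in Lemma \ref{p2}), where $Q_4(t)$ is small and the comparison is most sensitive; a first-order Taylor estimate of $Q_4$ near that critical $t$, combined with the constraint that $k$ be an integer, should produce exactly the stated case-dependent floor. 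The remaining pieces — existence of the $p_k$, their ordering, and $p_1=p_c$ — are then immediate from Lemmas \ref{fact}, \ref{p1}, and \ref{p2} as sketched above.
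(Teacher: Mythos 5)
Your first step is essentially the same as the paper's, with one nice addition: you observe that $L(k)=Q_4\bigl(\frac{n-4}{k+1}\bigr)-8(n-2)(n-4)$ is decreasing in $k\geq 1$ because $\frac{n-4}{k+1}$ decreases into $(0,\frac{n-4}{2}]$ where $Q_4$ is increasing (by its symmetry about $\frac{n-4}{2}$). That gives the initial-segment structure $\{k:\,L(k)>0\}=\{1,\dots,N\}$ directly, whereas the paper instead locates all four real roots of the associated quartic $\mathscr{F}(k)=\frac{2(k+1)^4}{n-4}\,L(k)$ and reads off the sign pattern. Both routes to the initial-segment claim are valid. Your ordering argument for $p_1<\cdots<p_N$ and the identification $p_1=p_c$ also match what the paper treats as immediate from Lemmas \ref{fact}, \ref{p1}, \ref{p2}; note, though, that your sentence ``for $p>p_c$ one has $\la_2<\la_3<0$, so $\la_2>\la_3$'' is self-contradictory — the correct point is simply that $\la_2\leq\la_3$ holds for all $p\geq p_c$ by Lemma \ref{fact}, so the $k=1$ threshold is $p_c$ by default.

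The genuine gap is the entire second step. You correctly reduce matters to showing $L(N)>0$ and $L(N+1)\leq 0$ for $N$ given by the stated formula, but you never establish these inequalities — you only say you would do so by a Taylor estimate of $Q_4$ near a critical argument, and you leave the $n=20$ threshold as something that ``should emerge.'' That determination is essentially the whole proof. Concretely, the paper shows the largest root of $\mathscr{F}(k)$ lies in $(n/2-5,\,n/2-4)$ by evaluating $\mathscr{F}$ at the two endpoints and factoring the resulting quartics in $n$ around $n=13$ to exhibit positivity/negativity; it then resolves the only ambiguous integer, $k=(n-9)/2$ for odd $n$, by computing $\mathscr{F}\bigl((n-9)/2\bigr)=\frac{n-1}{2}\bigl(n^3-33n^2+312n-892\bigr)$ and checking this is positive iff $n\geq 20$. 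Your proposal contains none of these computations and gives no mechanism for deriving the cubic $n^3-33n^2+312n-892$ or the cutoff $n=20$, so as it stands it is a plan rather than a proof of the formula for $N$.
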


\begin{proof}
According to the previous two lemmas, it suffices to show that the limit \eqref{lim} is positive when $k\leq N$ but negative
when $k\geq N+1$. Let us thus focus on the quartic
\begin{align}\label{F}
\mathscr{F}(k) \equiv \frac{2(k+1)^4}{n-4} \cdot \left[ Q_4 \left( \frac{n-4}{k+1} \right) - 8(n-2)(n-4) \right]
\end{align}
which is merely the limit \eqref{lim} times a positive factor.

To show that $\mathscr{F}(k)$ has three roots in the interval $(1-n/2,1)$, we show that
\begin{equation}\label{F1}
\mathscr{F}(1-n/2)<0, \qquad \mathscr{F}(-1)>0, \qquad \mathscr{F}(0) < 0, \qquad \mathscr{F}(1)>0.
\end{equation}
First of all, we have $Q_4(-2)=Q_4(n-4)=0$, so we easily get
\begin{equation*}
k= 0,1-n/2 \quad\Longrightarrow\quad \mathscr{F}(k) = -\frac{2(k+1)^4}{n-4} \cdot 8(n-2)(n-4) <0.
\end{equation*}
Using our definitions \eqref{Q} and \eqref{F}, we can also verify that
\begin{align*}
\mathscr{F}(-1) = \lim_{k\to -1} \frac{2(k+1)^4}{n-4} \cdot Q_4\left( \frac{n-4}{k+1} \right) = 2(n-4)^3 >0,
\end{align*}
while the fact that $\mathscr{F}(1)>0$ follows by the Taylor expansion
\begin{align*}
\mathscr{F}(1) &= 2n^3 - 8n^2 - 256n+ 512 \\
&= 2(n-13)^3 +70(n-13)^2 + 550(n-13) + 226.
\end{align*}
This proves \eqref{F1}, which implies that $\mathscr{F}$ has three roots in $(1-n/2,1)$.

To see that the fourth root lies in $(n/2-5, n/2-4)$, we now note that
\begin{align*}
\mathscr{F}(n/2-5) &= 2n^4 - 60n^3 + 608n^2 - 2336n + 2432 \\
&= 2(n-13)^4 + 44(n-13)^3 + 296(n-13)^2 + 628(n-13)+ 118
\end{align*}
is positive for each $n\geq 13$, whereas
\begin{align*}
\mathscr{F}(n/2-4) &= -n^4 + 18n^3 - 124n^2 + 416n - 608 \\
&= -(n-13)^4 - 34(n-13)^3 - 436(n-13)^2 - 2470(n-13) - 5171
\end{align*}
is negative.  In particular, the fourth root of $\mathscr{F}$ must lie in $(n/2-5,n/2-4)$.

\Case{1} If $1\leq k\leq n/2-5$, then $\mathscr{F}(k)$ is positive by above.

\Case{2} If $k\geq n/2-4$, then $\mathscr{F}(k)$ is negative by above.

\Case{3} If $k= (n-9)/2$ and $n$ is odd, finally, then we can readily check that
\begin{equation*}
\mathscr{F}(k) = \frac{n-1}{2} \cdot (n^3 -33n^2 +312n -892)
\end{equation*}
is positive if and only if $n\geq 20$.  In any case then, the result follows easily.
\end{proof}

\bibliography{asymptotic}
\bibliographystyle{siam}
\end{document}